\newtheorem{theorem}{Theorem}
\newtheorem{cor}{Corollary}[section]
\newtheorem{lemma}[cor]{Lemma}
\newtheorem{prop}[cor]{Proposition}
\newtheorem{defn}[cor]{Definition}
\newtheorem{rem}[cor]{Remark}
\newcommand{\tr}{\operatorname{tr}}
\newcommand{\perIntv}{[-\pi,\pi]}
\newcommand{\perIntegral}{\int_{-\pi}^{\pi}}
\newcommand{\yobs}{y^{\mathrm{obs}}}
\newcommand{\hid}[1]{}
\newcommand{\Xspace}{\mathbb{X}}
\newcommand{\Yspace}{\mathbb{Y}}
\newcommand{\calO}{\mathcal{O}}
\newcommand{\paren}[1]{\left(#1\right)}
\newcommand{\bracket}[1]{\left[#1\right]}
\newcommand{\N}{\mathds{N}}
\newcommand{\Z}{\mathds{Z}}
\newcommand{\R}{\mathds{R}}
\let\RE\Re
\let\Re=\undefined
\DeclareMathOperator{\Re}{\RE e}
\let\IM\Im
\let\Im=\undefined
\DeclareMathOperator{\Im}{\IM m}
\DeclareMathOperator{\trace}{trace}
\DeclareMathOperator{\argmin}{argmin}
\newcommand{\iunit}{\mathrm i}
\newcommand{\norm}[1]{\left\| #1\right\|}
\newcommand{\F}{\mathcal{F}}
\newcommand{\ourtitle}{Density Matrix Reconstructions in Ultrafast Transmission Electron Microscopy:
Uniqueness, Stability, and Convergence Rates}
\title{\ourtitle}
\author{
Cong Shi\thanks{\footnotesize Institute of Numerical and Applied Mathematics, University of G\"ottingen}
 \and Claus Ropers \thanks{\footnotesize IV. Physical Institute, University of G\"ottingen}
\and Thorsten Hohage\thanks{\footnotesize Institute of Numerical and Applied Mathematics, University of G\"ottingen}}
\begin{document}
\date{}
\maketitle

\begin{abstract}
In the recent paper \cite{PriRatYalHohFeiSchRop17} the first experimental
determination of the density matrix of a free electron beam has been reported.
The employed method leads to a
linear inverse problem with a positive semidefinite operator as unknown.
The purpose of this paper is to complement the experimental and algorithmic results
in the work mentioned above by a mathematical analysis of the inverse problem concerning uniqueness,
stability, and rates of convergence under different types of \textit{a-priori} information.
\end{abstract}

\medskip

\bigskip


\noindent {\footnotesize Keywords: Tikhonov regularization, variational source
conditions, uniqueness, stability, electron microscopy, SQUIRRELS}

\section{Introduction}
The density matrix is a fundamental notion in quantum statistics which describes the
statistical state of an ensemble of identical 
single or many body quantum systems. It is a positive semidefinite
operator of trace $1$ on the Hilbert space describing the state of a single quantum system.
In the area of quantum optics, there are well-established techniques for characterizing the quantum state of the electromagnetic field in terms of its density matrix
\cite{PR:04,schleich:15}. Such `quantum state tomography' facilitates the discrimination of, for example, coherent states, squeezed states, thermal states or photon number (Fock) states.
In contrast, the reconstruction of the quantum state of a beam of free electrons has only recently been established using inelastic electron-light scattering \cite{PriRatYalHohFeiSchRop17}.
The reconstruction technique, termed `SQUIRRELS' for  `Spectral Quantum Interference for the Regularized Reconstruction of free ELectron States'), is experimentally based on the principle of `Photon-Induced Near Field Electron Microscopy' (PINEM) \cite{BFZ:09}.
In PINEM, a beam of electrons is passed through the near field of laser-illuminated nanostructures or thin films, leading to the formation of sidebands in the electron energy spectrum, spaced by the photon energy
\cite{PLZ:10,abajo:10}. The spatially varying number of created sidebands yields the optical field strength with very high resolution on the nanometer scale
\cite{BFZ:09,piazza_etal:15}. However, the quantum coherent nature of the electron-light interaction has
also led to the observation of other fundamental quantum effects, such as multilevel Rabi oscillations
\cite{feist_etal:15,AAK:10} or Ramsey-type phase interference in spatially separated fields \cite{EFSR:16}. Moreover, it has recently been shown that the interaction can be used to temporally structure electron beams into a train of attosecond pulses, the duration of which was determined by SQUIRRELS \cite{PriRatYalHohFeiSchRop17}. The various existing and future applications of inelastic electron-light scattering and the relevance of the specific electron state resulting from such interactions calls for a solid mathematical basis underlying the quantum state reconstruction scheme.
The mathematical aspects of SQUIRRELS involve a linear inverse problem with the density matrix as unknown. In \cite{PriRatYalHohFeiSchRop17} this inverse problem was solved by Tikhonov regularization with positive semidefiniteness and trace constraints using quadratic semi-definite programming. The purpose of the present paper is to provide mathematical foundations of the SQUIRRELS method.

In the experiment in \cite{PriRatYalHohFeiSchRop17}, light reflection from a thin graphite film mediates the
 interaction of free electrons with laser photons of two frequencies $\omega$ and $2\omega$ and a controllable
 relative phase $\theta$.
%
%
As the interactions of electrons with
laser photons lead to a comb-type energy spectrum of the electrons separated by the photon
energy, the Hilbert space describing the state of an electron may be chosen as $l^2(\Z)$.
The effect of the interaction of an $\omega$-photon with a single electron is described
by a unitary operator $U_{\omega}(\theta):l^2(\Z)\to l^2(\Z)$ given in matrix representation by
\begin{equation}\label{U-omega}
(U_{\omega}(\theta))_{k, l} = e^{\iunit(k-l)\theta}J_{k-l}(2|g_\omega|),\qquad k,l\in\Z.
\end{equation}
Here $J_{l}(2|g_\omega|)$ denotes the Bessel function of the first kind of order $l$,
and $g_\omega$ is a coupling constant associated with the laser.
The effect of the photon-electron interaction on the free-electron density matrix
$\rho = (\rho_{j,k})_{j,k\in\Z}$ is then described by
\[
\rho_{\text{out}}(\theta) = U_{\omega}(\theta)\rho U^*_{\omega}(\theta),
\]
where $\rho$ and $\rho_{\text{out}}(\theta)$ are the density matrices before and after the interaction,
respectively. However, only the diagonal values $\rho_{\text{out}}(\theta)$ are
observable. On the other hand, since the phase parameter $\theta\in\perIntv$ is experimentally
controllable, we may observe a spectrogram
$y(\theta,l)=\paren{\rho_{\text{out}}(\theta)_{ll}}_{l\in\Z}$ for each value of $\theta$.
The inverse problem to find the electron density matrix $\rho$ from the measured data $y$
is then described by the operator equation
\begin{equation}\label{eq:opeq}
T\rho = y
\end{equation}
with a bounded linear forward operator $T:\Xspace\to \Yspace$
between Hilbert spaces $\Xspace:=l^2(\Z\times\Z)$
and $\Yspace:= L^2(\Z\times\perIntv)$ given by
\begin{equation}\label{OperatorTDef}
(T\rho)(l, \theta) = p(l, \theta) = ( U_{\omega}(\theta)\rho U^*_{\omega}(\theta))_{l, l}.
\end{equation}
The aim of this paper is to analyze this inverse problem mathematically concerning
uniqueness, stability, and rates of convergence.

The plan of the remainder of this paper is organized as follows: In section \ref{sec:main_results}
we formulate our main results. Section \ref{sec:uniqueness} is devoted to the
proof that $T$ is injective. It is based on a factorization of $T$ which is also
fundamental for the rest of this paper. Our main tool for the proofs of stability estimates
and convergence rates  are variational source conditions,
which will be treated in section \ref{sec:vsc}.

\section{Main results}\label{sec:main_results}
Our first main result asserts that the unknown density matrix is in fact uniquely determined by
the experimental data in the absence of noise and modelling errors:

\begin{theorem}[uniqueness]\label{thUniqueness}
The operator $T$ defined in \eqref{OperatorTDef} is injective.
\end{theorem}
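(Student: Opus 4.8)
The plan is to diagonalise $T$ by two Fourier transformations and thereby reduce injectivity to the non-vanishing of an explicit Bessel symbol. First I would write out the matrix entries of $T\rho$. Since the Bessel functions are real-valued we have $(U_\omega^*(\theta))_{k,l}=e^{-\iunit(l-k)\theta}J_{l-k}(2\abs{g_\omega})$, and a direct computation of $(U_\omega(\theta)\rho U_\omega^*(\theta))_{l,l}$ gives
\[
(T\rho)(l,\theta)=\sum_{j,k\in\Z} e^{\iunit(k-j)\theta}\,J_{l-j}(2\abs{g_\omega})\,J_{l-k}(2\abs{g_\omega})\,\rho_{j,k}.
\]
The decisive structural point — the factorisation announced in the introduction — is that $\theta$ enters only through the difference $k-j$, so expanding $(T\rho)(l,\cdot)$ into a Fourier series in $\theta$ decouples $T$ along the off-diagonals of $\rho$.

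Concretely, I would pass to the Fourier coefficients in $\theta$. The map sending $g\in L^2(\perIntv)$ to its Fourier coefficients is an isometric isomorphism onto $l^2(\Z)$ and in particular injective, so $T\rho=0$ is equivalent to the vanishing of the $m$-th coefficient for all $l,m\in\Z$. Writing $a^{(m)}_j:=\rho_{j,j+m}$ for the entries on the $m$-th off-diagonal (note $a^{(m)}\in l^2(\Z)$ since $\rho\in\Xspace$) and $\kappa^{(m)}_n:=J_n(2\abs{g_\omega})\,J_{n-m}(2\abs{g_\omega})$, the $m$-th coefficient equals $\sum_{j\in\Z}\kappa^{(m)}_{l-j}a^{(m)}_j=(\kappa^{(m)}*a^{(m)})_l$. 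Because $J_n(2\abs{g_\omega})$ decays super-exponentially in $\abs{n}$, each kernel $\kappa^{(m)}$ lies in $l^1(\Z)$, so the associated convolution operator is bounded on $l^2(\Z)$ and, under the Fourier transform, unitarily equivalent to multiplication by its symbol $\hat\kappa^{(m)}(\phi):=\sum_{n\in\Z}\kappa^{(m)}_n e^{\iunit n\phi}$. Such a multiplication operator is injective exactly when its symbol is non-zero for almost every $\phi$; hence the theorem reduces to proving $\hat\kappa^{(m)}\neq0$ a.e.\ for every $m\in\Z$.

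The core of the argument is to compute this symbol and to check its non-vanishing. Using the Jacobi--Anger expansion $\sum_{n\in\Z}J_n(x)e^{\iunit n\psi}=e^{\iunit x\sin\psi}$ to rewrite $\hat\kappa^{(m)}$ as a single Fourier integral, then simplifying via $\sin\psi+\sin(\phi-\psi)=2\sin(\phi/2)\cos(\psi-\phi/2)$ and recognising the integral representation of the Bessel function, I expect to arrive at the closed form
\[
\hat\kappa^{(m)}(\phi)=\iunit^{m}\,e^{\iunit m\phi/2}\,J_m\!\left(4\abs{g_\omega}\sin(\phi/2)\right).
\]
The prefactor $\iunit^m e^{\iunit m\phi/2}$ never vanishes, so it remains to show that $\phi\mapsto J_m\!\left(4\abs{g_\omega}\sin(\phi/2)\right)$ is non-zero almost everywhere. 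In the physically relevant case of non-trivial coupling $g_\omega\neq0$ this function is real-analytic on $\perIntv$ and not identically zero (its expansion at $\phi=0$ has leading term proportional to $\phi^{\abs{m}}$, and $J_0(0)=1$), so its zero set is finite and thus of measure zero. Consequently each convolution operator is injective, forcing $a^{(m)}=0$ for every $m$, i.e.\ $\rho=0$.

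I expect the main obstacle to be the symbol computation rather than the final analyticity step. In particular one has to justify the interchange of summation and integration — which is exactly where the rapid decay of the Bessel coefficients is used — and carry out the reduction of the Fourier integral to a single Bessel integral via the trigonometric identity, keeping track of the phase factors. Once the closed form for $\hat\kappa^{(m)}$ is established, injectivity follows from the standard facts that convolution is diagonalised by the Fourier transform and that multiplication operators with an almost-everywhere non-zero symbol are injective.
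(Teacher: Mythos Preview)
Your proposal is correct and follows essentially the same route as the paper: factorise $T$ via Fourier transforms in $\theta$ and $l$ (equivalently, recognise the convolution in $l$ along each off-diagonal), reduce to a multiplication operator, compute the symbol via Jacobi--Anger to obtain $\iunit^{m}e^{\iunit m\phi/2}J_m(4|g_\omega|\sin(\phi/2))$, and conclude injectivity from analyticity of the symbol. The paper packages the factorisation as $T=\F^{(*1,2)}M_m\F^{(1)}\mathcal{G}$ with named unitary operators, but the content is identical to your convolution/Fourier argument.
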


It follows from our analysis (see Corollary \ref{coro:illposedness}) and has been observed in numerical
experiments that the inverse problem \eqref{eq:opeq} is ill-posed. Therefore, a natural question concerns
the degree of ill-posedness or the degree of stability that can be obtained under certain types of a-priori
information on the true density matrix $\rho^{\dagger}$.
This will be addressed in the following three theorems.

Moreover, to obtain stable reconstruction
for noisy experimental data $\yobs$, some kind of regularization has to be employed.
In \cite{PriRatYalHohFeiSchRop17} constrained Tikhonov regularization of the following form has been used:
\begin{equation}\label{eq:sol_Tikhonov}
\widehat{\rho}_{\alpha}=\arg\min_\rho{\|T\rho-\yobs\|_\Yspace^2+\alpha\|\rho\|_\Xspace^2}
\qquad \mbox{subject to } \rho\geq 0,\;\trace(\rho)=1
\end{equation}
Here we minimize only over density matrices, i.e.\
positive semidefinte operators of trace $1$.
The regularization parameter $\alpha>0$ is chosen
by the discrepancy principle as follows: Let $\delta>0$ be the deterministic noise level, i.e.
\[
\norm{T\rho^\dagger-\yobs}_\Yspace<\delta
\]
for the true density matrix $\rho^\dagger$. Then $\alpha>0$ is chosen such that
\begin{equation}\label{eq:disc}
\delta\leq\norm{T\widehat{\rho}_{\alpha}-\yobs}_{\Yspace}\leq\tau\delta
\end{equation}
for some $\tau>1$. We will also derive error bounds for Tikhonov regularization described by
\eqref{eq:sol_Tikhonov} and \eqref{eq:disc}.

We first consider band-limited density matrices:
\begin{theorem}[H\"older-type estimates for band limited $\rho$]\label{rates1}
Suppose the density matrices $\rho^{(1)}$, $\rho^{(2)}$, and
$\rho^\dagger$ are band-limited, i.e.\
\begin{equation*}
\rho_{n+k, n}=0, \qquad \mbox{for all } |k|>k_0
\end{equation*}
for $\rho\in\{\rho^{\dagger},\rho^{(1)},\rho^{(2)}\}$
and some $k_0>0$. Then the stability estimate
\begin{equation}\label{eq:Hoelder_stability}
\norm{\rho^{(1)}-\rho^{(2)}}
\leq C\norm{T\rho^{(1)}-T\rho^{(2)}}^{\frac{1}{1+2k_0}}
\end{equation}
holds true for some constant $C$ depending only on $k_0$ and $|g_{\omega}|$.
Moreover, for the Tikhonov regularized solution $\widehat{\rho}_{\alpha}$ given by
\eqref{eq:sol_Tikhonov} with parameter choice rule $\alpha \sim\delta^\frac{2+2k_0}{1+2k_0}$,
or with $\alpha$ chosen by the discrepancy principle \eqref{eq:disc}, the error bound
\begin{equation}\label{eq:Hoelder_rate}
\norm{\widehat{\rho}_{\alpha}-\rho^\dagger} \leq C\delta^{\frac{1}{1+2k_0}}
\end{equation}
is satisfied for all $\delta\in (0,1]$
with $C$ independent of $\rho^\dagger$.
\end{theorem}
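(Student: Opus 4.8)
The plan is to exploit the factorization of $T$ established in Section~\ref{sec:uniqueness}, which diagonalizes the problem diagonal-by-diagonal of $\rho$. Writing $d_k(n):=\rho_{n+k,n}$ for the $k$-th diagonal of $\rho$, expanding $T\rho$ in a Fourier series in $\theta$, and applying the discrete Fourier transform (DTFT, variable $\xi$) in the index $l$, one checks that the Fourier mode of $T\rho$ at frequency $m$ in $\theta$ depends only on the single diagonal $d_{-m}$, on which $T$ acts by multiplication with
\[
\widehat h_m(\xi)=(-\iunit)^m \e^{\iunit m\xi/2}\,J_m\paren{4\abs{g_\omega}\sin(\xi/2)},\qquad \xi\in\perIntv .
\]
Since only $\abs{\widehat h_m(\xi)}=\abs{J_m\paren{4\abs{g_\omega}\sin(\xi/2)}}$ matters, the key structural fact is that, by the small-argument asymptotics of Bessel functions, $\abs{\widehat h_m(\xi)}\gtrsim\abs{\xi}^{\abs m}$ near $\xi=0$, so the multiplier acting on $d_k$ vanishes to order $\abs k\le k_0$ at the origin; the worst case $\abs k=k_0$ will dictate the exponent. (There may be finitely many further zeros where $4\abs{g_\omega}\sin(\xi/2)$ hits a positive zero of $J_m$; these are simple and are handled identically, contributing only milder exponents, since away from all zeros the multiplier is bounded below.)

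The second ingredient is an a-priori bound from the constraint structure: for any density matrix $\rho$, positive semidefiniteness gives $\abs{\rho_{n+k,n}}\le\paren{\rho_{n+k,n+k}\rho_{n,n}}^{1/2}$, whence by Cauchy--Schwarz $\sum_n\abs{d_k(n)}\le\trace(\rho)=1$. Thus each diagonal lies in $l^1(\Z)$ with norm at most $1$, so its DTFT $\widehat d_k$ is continuous with $\norm{\widehat d_k}_\infty\le1$ (and $\le 2$ for a difference of two density matrices). For the stability estimate~\eqref{eq:Hoelder_stability} I would then argue per diagonal: splitting $\int\abs{\widehat d_k}^2$ at a threshold $\abs\xi=\varepsilon$, bounding the low-frequency part by a term of order $\norm{\widehat d_k}_\infty^2\,\varepsilon$ and the high-frequency part by $\varepsilon^{-2k_0}$ times the corresponding contribution to $\norm{T\rho}^2$, and optimizing $\varepsilon$. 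Summing over the finitely many diagonals $\abs k\le k_0$ and using Parseval yields $\norm{\rho}^2\lesssim\norm{T\rho}^{2/(1+2k_0)}$, i.e.\ the claimed Hölder estimate, with $C$ depending only on $k_0$ and $\abs{g_\omega}$.

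For the error bound~\eqref{eq:Hoelder_rate} I would pass through a variational source condition and invoke the abstract results of Section~\ref{sec:vsc}. Concretely, the goal is to show that for all density matrices $\rho$
\[
\scalar{\rho^\dagger}{\rho^\dagger-\rho}\le\tfrac{1-\beta}{2}\norm{\rho-\rho^\dagger}^2+C\,\norm{T(\rho-\rho^\dagger)}^{\frac{2}{1+2k_0}} .
\]
Two reductions make this tractable. First, since $\rho^\dagger$ is band-limited, $\scalar{\rho^\dagger}{\rho^\dagger-\rho}=\scalar{\rho^\dagger}{\rho^\dagger-P\rho}$, where $P$ truncates to diagonals $\abs k\le k_0$; moreover $\norm{T(\rho-\rho^\dagger)}\ge\norm{T(P\rho-\rho^\dagger)}$ because the blocks belonging to distinct $\theta$-frequencies are orthogonal in $\Yspace$, so one is reduced to the band-limited difference $v:=P\rho-\rho^\dagger$. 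Second, for each diagonal I split the pairing $\int\overline{\widehat d_k^\dagger}\,\widehat v_k$ at $\abs\xi=\varepsilon$: the low-frequency part is absorbed into the slack $\tfrac{1-\beta}{2}\norm{v}^2$ by Young's inequality, leaving a term of order $\varepsilon$, while in the high-frequency part one keeps the weight and estimates $\int_{\abs\xi\ge\varepsilon}\abs{\widehat d_k^\dagger}^2\,\abs{\widehat h_{-k}}^{-2}\le\norm{\widehat d_k^\dagger}_\infty^2\int_{\abs\xi\ge\varepsilon}\abs{\widehat h_{-k}}^{-2}\lesssim\varepsilon^{1-2k_0}$, using $\norm{\widehat d_k^\dagger}_\infty\le1$. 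Optimizing $\varepsilon\sim\norm{T(\rho-\rho^\dagger)}^{2/(1+2k_0)}$ produces exactly the exponent $2/(1+2k_0)$, and the rate~\eqref{eq:Hoelder_rate} for both the a-priori choice $\alpha\sim\delta^{(2+2k_0)/(1+2k_0)}$ and the discrepancy principle then follows from the standard variational-source-condition convergence theory.

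The main obstacle, and the reason a naive argument falls short, is obtaining the \emph{sharp} exponent in the source condition: bounding the high-frequency pairing crudely by Cauchy--Schwarz with $\inf\abs{\widehat h_{-k}}^2\sim\varepsilon^{2k_0}$ only yields exponent $1/(1+2k_0)$ and hence the suboptimal rate $\delta^{1/(2+4k_0)}$. It is essential to (i) retain the singular weight $\abs{\widehat h_{-k}}^{-2}$ inside the integral and exploit its integrability away from the origin together with the uniform bound $\norm{\widehat d_k^\dagger}_\infty\le1$ on the true diagonals, and (ii) make genuine use of the slack term $\tfrac{1-\beta}{2}\norm{\rho-\rho^\dagger}^2$ for the low frequencies. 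Establishing the uniform lower bound $\abs{\widehat h_m(\xi)}\gtrsim\abs\xi^{\abs m}$ and controlling the finitely many additional zeros of the multiplier is the remaining technical point.
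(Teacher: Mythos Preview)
Your proposal is correct and follows essentially the same approach as the paper: both exploit the factorization of $T$ into unitaries times a multiplication operator, use the $L^\infty$ bound $\|\widehat d_k^\dagger\|_\infty\le 1$ coming from positive semidefiniteness and $\trace\rho=1$ (the paper's Lemma~\ref{Lemma:F1GRhoBounded}), and verify a H\"older-type variational source condition by splitting into a ``bad'' set where the multiplier is small and its complement, using the integrability of $|\widehat h_{-k}|^{-2}$ on the complement to obtain the sharp $\sigma_\varepsilon$ (the paper packages this as Lemma~\ref{lem:sigmaeps}). The only differences are cosmetic: the paper parametrizes the bad set as the sublevel set $\{|m|<\varepsilon\}$ (which automatically absorbs the finitely many additional simple zeros of the multiplier that you mention in passing), derives the stability estimate \eqref{eq:Hoelder_stability} from the VSC via Remark~\ref{rem:VSCstab} rather than by a separate direct argument, and invokes the abstract Lemma~\ref{le:VSC} instead of carrying out the Young/Cauchy--Schwarz splitting by hand.
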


If we relax band-limitation to a polynomial or exponential
decay condition, we
only obtain slower than H\"older stability estimates and convergence rates:

\begin{theorem}[Sub-H\"older rates under decay conditions]\label{rates2}
Suppose there exists $C_\rho>0$ such that the off-diagonal entries of the density matrices  $\rho^{(1)}$, $\rho^{(2)}$, and
$\rho^\dagger$ satisfy either a exponential or a
polynomial decay condition:
\begin{subequations}
\begin{align}\label{Log-Assum}
&\sum_{n=-\infty}^{\infty}|\rho_{n+k,n}| \leq C_\rho|k|^{-\frac{1}{2}-2\mu}
\quad \mbox{for some $\mu>0$ or}\\
\label{Hol-Assum2}
&\sum_{n=-\infty}^{\infty}|\rho_{n+k,n}| \leq C_\rho b^{|k|}
\quad\mbox{ for some }b<1
\end{align}
\end{subequations}
and for all $k\in\mathbb{Z}$
and $\rho\in\{\rho^{(1)}, \rho^{(2)}, \rho^{\dagger}\}$.
Then the stability estimate
\begin{equation*}
\norm{\rho^{(1)}-\rho^{(2)}}\leq
\Phi\paren{\norm{T\rho^{(1)}-T\rho^{(2)}}}
\end{equation*}
holds true for $\norm{T\rho^{(1)}-T\rho^{(2)}}\leq \frac{1}{2}$, and for the Tikhonov regularized solution
$\widehat{\rho}_{\alpha}$ given by \eqref{eq:sol_Tikhonov} with
$\alpha$ chosen by the discrepancy principle \eqref{eq:disc}, the error is bounded by
\[
\norm{\widehat{\rho}_{\alpha}-\rho^\dagger}\leq 2(1+\tau)\Phi(\delta)
\]
where the function $\Phi$ is given by
\begin{align*}
\Phi(\delta):=
\begin{cases}
C\left(\frac{-\log \delta}{\log(-\log \delta)}\right)^{-2\mu}& \mbox{in case of \eqref{Log-Assum}},\\
C\exp\left(-\sqrt{(-\log\delta)(-\log b)}\right)&
 \mbox{in case of \eqref{Hol-Assum2}},
\end{cases}
\end{align*}
for $\delta\in(0,1/2]$
with some constant $C$ depending only on $C_\rho$ and $|g_{\omega}|$, $\mu$, and $b$.
\end{theorem}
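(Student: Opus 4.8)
The plan is to exploit the factorization of $T$ established in section~\ref{sec:uniqueness} to reduce the problem to a family of one-dimensional Fourier multipliers built from Bessel functions, and then to run a frequency-splitting argument. Expanding \eqref{OperatorTDef} and setting $m=k-j$ for the off-diagonal index, one obtains
\[
(T\rho)(l,\theta)=\sum_{m\in\Z}\e^{\iunit m\theta}\sum_{j\in\Z}J_{l-j}(2\abs{g_\omega})\,J_{l-j-m}(2\abs{g_\omega})\,\rho_{j,j+m},
\]
so that the $m$-th Fourier coefficient in $\theta$ depends only on the $m$-th diagonal $d_m:=(\rho_{j,j+m})_{j\in\Z}$ and acts on it by convolution with the kernel $r\mapsto J_r(2\abs{g_\omega})J_{r-m}(2\abs{g_\omega})$. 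Passing to the discrete Fourier transform in $j$ turns this convolution into multiplication by the symbol $\sum_r J_r(2\abs{g_\omega})J_{r-m}(2\abs{g_\omega})\e^{\iunit r\phi}$, which by the Jacobi--Anger expansion and a sum-to-product identity equals, up to a unimodular factor, $J_m\paren{4\abs{g_\omega}\sin(\phi/2)}$. Writing $\sigma:=\rho^{(1)}-\rho^{(2)}$ and letting $\hat d_m$ denote the transform of its $m$-th diagonal, this yields the decoupled identities $\norm{T\sigma}_\Yspace^2=\sum_m\perIntegral\abs{J_m(4\abs{g_\omega}\sin(\phi/2))}^2\abs{\hat d_m(\phi)}^2\d\phi$ and $\norm{\sigma}_\Xspace^2=\sum_m\perIntegral\abs{\hat d_m(\phi)}^2\d\phi$, so the estimate splits into scalar multiplier problems indexed by $m$.

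The crux is a uniform control of the sublevel sets of these multipliers. From Poisson's integral representation one has the clean bound $\abs{J_m(w)}\le(\abs{w}/2)^{\abs{m}}/\abs{m}!$ for all real $w$, giving vanishing of order $\abs{m}$ at $\phi=0$ and, at the extreme point, $\abs{J_m(4\abs{g_\omega}\sin(\phi/2))}\le(2\abs{g_\omega})^{\abs{m}}/\abs{m}!$. By Stirling's formula the latter drops below a threshold $\epsilon>0$ once $\abs{m}$ exceeds an effective bandwidth $M(\epsilon)\sim\log(1/\epsilon)/\log\log(1/\epsilon)$, beyond which the data contain no information at level $\epsilon$. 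For $\abs{m}$ larger than a constant depending only on $\abs{g_\omega}$ the argument $4\abs{g_\omega}\sin(\phi/2)$ stays below the first positive zero of $J_m$, so $J_m(4\abs{g_\omega}\sin(\phi/2))$ is monotone away from $\phi=0$ and the alternating series furnishes a matching lower bound; hence the bad set $B_m(\epsilon):=\set{\phi\in\perIntv:\abs{J_m(4\abs{g_\omega}\sin(\phi/2))}<\epsilon}$ is an interval about the origin of length $\asymp(\abs{m}!\,\epsilon)^{1/\abs{m}}$. Only the finitely many low diagonals $\abs{m}\le C(\abs{g_\omega})$ carry interior zeros; I would bound their contribution separately, each simple zero producing a bad interval of length $O(\epsilon)$, so that in total they add only a power of $\epsilon$, negligible against the final rate.

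With these estimates I would carry out the frequency split underlying the variational source condition. For each $m$ I decompose $\perIntegral\abs{\hat d_m}^2$ over $B_m(\epsilon)$ and its complement: on the complement $\abs{\hat d_m}^2\le\epsilon^{-2}\abs{J_m(\cdot)}^2\abs{\hat d_m}^2$, while on $B_m(\epsilon)$ the a-priori information enters through $\abs{\hat d_m(\phi)}\le\norm{d_m}_{\ell^1}\le C_\rho\,w(m)$, with $w(m)=\abs{m}^{-1/2-2\mu}$ under \eqref{Log-Assum} and $w(m)=b^{\abs{m}}$ under \eqref{Hol-Assum2}. Summing over $m$ gives
\[
\norm{\sigma}_\Xspace^2\lesssim\epsilon^{-2}\norm{T\sigma}_\Yspace^2+C_\rho^2\sum_{m}w(m)^2\abs{B_m(\epsilon)},
\]
and choosing $\epsilon\sim\norm{T\sigma}_\Yspace^{1/2}$ makes the first term lower order and reduces the theorem to estimating $\sum_m w(m)^2(\abs{m}!\,\epsilon)^{1/\abs{m}}$. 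Under \eqref{Log-Assum} this sum is dominated by the cutoff $\abs{m}\sim M(\epsilon)$ and is of order $M(\epsilon)^{-4\mu}$, which gives the rate $\paren{-\log\delta/\log(-\log\delta)}^{-2\mu}$; under \eqref{Hol-Assum2} the exponentially decaying weight forces an interior maximum at $\abs{m}\sim\sqrt{\log(1/\epsilon)/(-\log b)}$, where the summand is of order $\exp\paren{-c\sqrt{(-\log b)\log(1/\epsilon)}}$, giving the rate $\exp\paren{-\sqrt{(-\log\delta)(-\log b)}}$. This proves the stated stability estimate with the two forms of $\Phi$.

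For the Tikhonov error bound I would recast the same split as a variational source condition at $\rho^\dagger$: applying the stability estimate to $\sigma=\rho-\rho^\dagger$ and controlling the linear Bregman term by Cauchy--Schwarz together with $\norm{\rho^\dagger}_\Xspace\le\trace(\rho^\dagger)=1$ yields a variational source condition whose index function is of the same order as $\Phi$. The abstract results of section~\ref{sec:vsc} then convert this into the error estimate for the constrained Tikhonov functional \eqref{eq:sol_Tikhonov} under the discrepancy principle \eqref{eq:disc}, producing the constant $2(1+\tau)$. The main obstacle is the uniform-in-$m$ two-sided control of the Bessel sublevel sets---cleanly separating the generic large-$\abs{m}$ regime from the finitely many exceptional low diagonals---and the delicate optimization over $\epsilon$ that must reproduce \emph{exactly} the two stated forms of $\Phi$, in particular the interior-maximum computation giving the square root in \eqref{Hol-Assum2}, with all constants independent of $\rho^\dagger$.
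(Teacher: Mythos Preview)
Your proposal is correct and follows essentially the same route as the paper: the factorization of $T$ through the multiplier $J_k(4|g_\omega|\sin(\varphi/2))$, the sublevel-set bounds $|B_m(\varepsilon)|\lesssim m\,\varepsilon^{1/|m|}$ (with finitely many exceptional low $|m|$ handled separately), the resulting estimate $\sum_m w(m)^2|B_m(\varepsilon)|$ with the same cut-offs $k_0(\varepsilon)\sim\frac{-\log\varepsilon}{\log(-\log\varepsilon)}$ and $k_0(\varepsilon)\sim\sqrt{-\log\varepsilon/(-\log b)}$ in the two cases, and the optimization in $\varepsilon$. The only difference is packaging---the paper first verifies the variational source condition via Lemma~\ref{le:VSC} (Propositions~\ref{prop:VSC_pol_decay} and~\ref{prop:VSC_exp_decay}) and then reads off both stability (Remark~\ref{rem:VSCstab}) and the Tikhonov rate (Proposition~\ref{prop:convTikhDisc}), whereas you derive stability directly from the split and then recast it as a VSC; both orderings rest on the same computation.
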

Note that the logarithmic rate
$\paren{\frac{-\log\delta}{\log(-\log\delta)}}^{-\mu}$
for the polynomial decay condition \eqref{Log-Assum}
is slower than $\mathcal{O}((-\log\delta)^{-\mu})$
as $\delta\to 0$,
but faster than
$\mathcal{O}((-\log\delta)^{-\mu'})$ for any
$\mu'<\mu$.
On the other hand,
the rate for the exponential decay condition
\eqref{Hol-Assum2} is slower that any H\"older
rate $\mathcal{O}(\delta^{\nu})$ for $\nu>0$, but faster
that any logarithmic rate. Such rates of convergence and
stability estimates occur much less frequently
than H\"older and logarithmic rates, but similar rates
have been derived e.g.\ in scattering theory for the reconstruction of a near field data from far field data
(see \cite[Lemma 4.2]{HH:01}).

\section{Uniqueness}\label{sec:uniqueness}
The aim of this section is to prove Theorem \ref{thUniqueness}.
Throughout this paper we use the following notations:

Let $\F:L^2(\perIntv) \to \ell^2(\Z)$,
\begin{align*}
(\F f)_n&=\frac{1}{\sqrt{2\pi}}\perIntegral e^{\iunit n\theta}f(\theta)\,d\theta, \qquad f\in L^2\perIntv, n\in\Z
\end{align*}
denote the periodic Fourier transform. Since the scaling factor is chosen
such that $\F$ is unitary, we have
\begin{align*}
(\F^{-1}a)(\theta)
&=(\F^{*}a)(\theta)=\frac{1}{\sqrt{2\pi}}\sum_{n\in\Z}e^{\iunit n\theta}a_n, \qquad
a\in\ell^2(\Z), \theta\in \perIntv.
\end{align*}
Recall the periodic Fourier convolution theorem
\begin{equation}\label{eq:Fourier_convolution}
2\pi\sum_{n\in\Z}e^{i\varphi n} a_nb_n = \perIntegral(\F^*a)(\tau) (\F^*b)(\varphi-\tau)\,d\tau,
\qquad \varphi\in\perIntv
\end{equation}
for $a,b\in \ell^2(\Z)$.
Fourier transforms on spaces of multi-variate functions will be labeled by superscript(s)  indicating the position of the variable(s) on which they act. For example,
\begin{align}
&\begin{aligned}\label{FourierTransformF1}
&{\F^{(*1)}}:\ell^2(\Z^2)\to L^2(\perIntv\times\Z)\\
&({\F^{*(1)}} a)(\theta,m):=\frac{1}{\sqrt{2\pi}}\sum_{n\in\Z}e^{\iunit n\theta}a_{n,m},
\end{aligned}\\
&\begin{aligned}\label{FourierTransformF2}
&{\F^{(1,*2)}}:L^2(\perIntv\times\Z)\to L^2(\Z\times\perIntv)\\
&({\F^{(1,*2)}} f)(n,\varphi):=\frac{1}{2\pi}\sum_{n\in\Z}\perIntegral e^{\iunit n\theta}e^{\iunit \varphi m}f(\theta,m)\,d\theta. 
\end{aligned}
\end{align}
It is easy to see that these operators are again unitary. (This can either be proved directly
or by noting that they are tensor products of unitary operator,
$\F^{*1}=\F^*\otimes I$ and $\F^{(1,*2)}= \F\otimes \F^*$.) In particular,
\begin{equation}\label{eq:mixedFTinv}
(\F^{(1,*2)})^{-1} = (\F^{(1,*2)})^{*}=\F^{(*1,2)}.
\end{equation}

Our basic tool for the uniqueness proof and also for the following sections is the following
factorization of the operator $T$:
\begin{prop}\label{prop:Tfactorization}
The operator $T$ defined in \eqref{OperatorTDef} has the factorization
\begin{equation}\label{eq:Tfactorization}
T=\F^{(*1,2)}M_m\F^{(1)}\mathcal{G}.
\end{equation}
Here $\F^{(1)}$ and $\F^{(*1,2)}$ are defined in \eqref{FourierTransformF1}--\eqref{eq:mixedFTinv},
$M_m:L^2(\perIntv\times\Z)\to L^2(\perIntv\times\Z)$ is the multiplication operator
\begin{align}\label{eq:Multiplier}
\begin{aligned}
&M_m f(\varphi, k):=f(\varphi, k)m(\varphi, k)\quad \mbox{with}\\
& m(\varphi, k):=\sqrt{2\pi}\iunit^k e^{\frac{\iunit\varphi k}{2}} J_{k}\paren{4 |g_{\omega}|\sin\frac{\varphi}{2}},
\end{aligned}
\end{align}
and $\mathcal{G}:\ell^2(\Z^2)\to\ell^2(\Z^2)$ is a matrix shift operator defined by
\begin{equation}\label{operatorG}
(\mathcal{G}\rho)_{n,k}:=\rho_{n+k,n},
\end{equation}
i.e.\ the $k$-th column of $\mathcal{G}\rho$ is the $k$-th diagonal of $\rho$.
\end{prop}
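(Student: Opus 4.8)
The plan is to verify the factorization by computing the right-hand side of \eqref{eq:Tfactorization} and matching it, entry by entry, against the defining formula $(T\rho)(l,\theta)=(U_\omega(\theta)\rho U_\omega^*(\theta))_{l,l}$. First I would expand the diagonal matrix element using the explicit entries \eqref{U-omega} and the fact that the Bessel functions $J_m(2|g_\omega|)$ are real, so that $\overline{(U_\omega(\theta))_{l,b}}=e^{-\iunit(l-b)\theta}J_{l-b}(2|g_\omega|)$. This yields
\[
(T\rho)(l,\theta)=\sum_{a,b\in\Z}e^{\iunit(b-a)\theta}J_{l-a}(2|g_\omega|)J_{l-b}(2|g_\omega|)\rho_{a,b}.
\]
The natural reparametrization $b=n$, $a=n+k$ replaces $\rho_{a,b}$ by $(\mathcal{G}\rho)_{n,k}$ and collapses the phase to $e^{-\iunit k\theta}$, giving
\[
(T\rho)(l,\theta)=\sum_{k\in\Z}e^{-\iunit k\theta}\sum_{n\in\Z}J_{l-n-k}(2|g_\omega|)J_{l-n}(2|g_\omega|)(\mathcal{G}\rho)_{n,k}.
\]
This already exhibits $\mathcal{G}$ as the innermost operator and isolates a product of two Bessel functions summed against the $k$-th diagonal of $\rho$.

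The crux of the proof is to turn this Bessel-weighted sum into the multiplier $M_m$ sandwiched between Fourier transforms. I would introduce the partial Fourier transform in the first index, $(\F^{(1)}\mathcal{G}\rho)(\psi,k)=\frac{1}{\sqrt{2\pi}}\sum_n e^{\iunit n\psi}(\mathcal{G}\rho)_{n,k}$ with the convention of \eqref{FourierTransformF1}, and insert the corresponding inversion formula for $(\mathcal{G}\rho)_{n,k}$. After the reindexing $j=l-n$ the inner $n$-sum factors as $e^{-\iunit l\psi}P(k,\psi)$, where
\[
P(k,\psi):=\sum_{j\in\Z}J_{j-k}(2|g_\omega|)J_j(2|g_\omega|)e^{\iunit j\psi}
\]
depends only on $k$ and $\psi$. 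Evaluating $P$ is the heart of the argument: applying the convolution theorem \eqref{eq:Fourier_convolution} together with the generating-function identity $\sum_m J_m(2|g_\omega|)e^{\iunit m\beta}=e^{\iunit 2|g_\omega|\sin\beta}$ turns the product of the two Bessel sequences into
\[
P(k,\psi)=\frac{1}{2\pi}\perIntegral e^{\iunit k\tau}e^{\iunit 2|g_\omega|(\sin\tau+\sin(\psi-\tau))}\,d\tau.
\]
The sum-to-product identity $\sin\tau+\sin(\psi-\tau)=2\sin\frac{\psi}{2}\cos(\tau-\frac{\psi}{2})$, the substitution $u=\tau-\frac{\psi}{2}$, and the integral representation $\frac{1}{2\pi}\perIntegral e^{\iunit z\cos u}e^{\iunit ku}\,du=\iunit^k J_k(z)$ then collapse $P$ to exactly $P(k,\psi)=\iunit^k e^{\iunit k\psi/2}J_k\!\left(4|g_\omega|\sin\frac{\psi}{2}\right)$, which is $m(\psi,k)$ up to the normalization factor $\sqrt{2\pi}$ appearing in \eqref{eq:Multiplier}.

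Finally I would reassemble the pieces. Substituting $P=\frac{1}{\sqrt{2\pi}}\,m(\psi,\cdot)$ back in yields
\[
(T\rho)(l,\theta)=\frac{1}{2\pi}\sum_{k\in\Z}\perIntegral e^{-\iunit l\psi}e^{-\iunit k\theta}\,m(\psi,k)\,(\F^{(1)}\mathcal{G}\rho)(\psi,k)\,d\psi,
\]
and I recognize the $\psi$-integration (continuous $\to$ discrete $l$) together with the $k$-summation (discrete $\to$ continuous $\theta$) as precisely the mixed transform $\F^{(*1,2)}$ applied to $M_m\F^{(1)}\mathcal{G}\rho$; the two $\frac{1}{\sqrt{2\pi}}$ factors and the $\sqrt{2\pi}$ in the definition of $m$ together account for all normalization constants and the signs match the adjoint convention \eqref{eq:mixedFTinv}. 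The main obstacle is the evaluation of $P(k,\psi)$, i.e.\ establishing the Bessel convolution identity; the remaining work is bookkeeping of Fourier conventions, signs, and normalizations, supplemented by a density argument (carrying out the interchange of sums and integrals first for finitely supported $\rho$ and then extending by continuity to general $\rho\in\ell^2(\Z^2)$, which is licensed by the unitarity of the transforms and boundedness of $M_m$).
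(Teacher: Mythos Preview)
Your proof is correct and follows essentially the same route as the paper: both arguments unwind the double sum in $(U_\omega\rho U_\omega^*)_{l,l}$, separate off the diagonal structure via $\mathcal{G}$ and a partial Fourier transform, and reduce matters to evaluating the Bessel kernel $P(k,\psi)=\sum_j J_{j-k}J_j e^{\iunit j\psi}$ by means of the generating function $\sum_m J_m e^{\iunit m\beta}=e^{\iunit 2|g_\omega|\sin\beta}$, the sum--to--product identity, and the integral representation $\frac{1}{2\pi}\int e^{\iunit z\cos u}e^{\iunit ku}\,du=\iunit^kJ_k(z)$. The only cosmetic difference is that the paper applies $\F^{(1,*2)}$ to $T\rho$ at the outset and then identifies the multiplier, whereas you insert the Fourier inversion formula for $(\mathcal{G}\rho)_{n,k}$ and recognise $\F^{(*1,2)}$ at the end; these are the same computation read in opposite directions.
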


\begin{proof}
By plugging the definition of $U_{\omega}$ into the definition of $T$ and using $U^*_{n,l}=\overline{U_{l,n}}$, we obtain
\begin{equation*}
\begin{split}
(T\rho)(l, \theta) &= (U_\omega(\theta)\rho U^*_\omega(\theta))_{l, l}\\
&= \sum_{m,n}(U_\omega(\theta))_{l, m} \rho_{m,n} (\overline{U^*_\omega(\theta)})_{n,l}\\
&= \sum_{m,n}e^{\iunit(l-m)\theta}J_{l-m}(2|g_\omega|) \rho_{m,n} e^{\iunit(n-l)\theta}\overline{J_{l-n}(2|g_\omega|)}.
\end{split}
\end{equation*}
Applying $\F^{(1,*2)}$ to both sides of this equation yields
\begin{eqnarray*}
\lefteqn{\left({\F^{(1,*2)}} T\rho\right)(\varphi,k)= \frac{1}{2\pi}\perIntegral\sum_{l\in\Z}e^{\iunit\varphi l}e^{\iunit\theta k} (T\rho)(l, \theta)d\theta} \\
&=& \frac{1}{2\pi}\sum_{l,m,n\in\Z}\left(\perIntegral e^{\iunit\theta (n-m+k)}d\theta\right) e^{\iunit\varphi l} J_{l-m}(2|g_\omega|)\overline{J_{l-n}(2|g_\omega|)} \rho_{m,n}.
\end{eqnarray*}
As $\perIntegral e^{\iunit\theta (n-m+k)}d\theta= 2\pi\delta_{m-n.k}$, this simplifies to
\begin{align*}
\left({\F^{(1,*2)}} T\rho\right)(\varphi,k)&= \sum_{\substack{m, n\in \Z \\ m-n=k}}\sum_{l\in \Z} e^{\iunit\varphi l}J_{l-m}(2|g_{\omega}|)\overline{J_{l-n}(2|g_\omega|)} \rho_{m,n}\\
&= \sum_{n, l\in \Z} e^{\iunit\varphi l}J_{l-n-k}(2|g_{\omega}|)\overline{J_{l-n}(2|g_\omega|)} \rho_{n+k,n}\\
&=\paren{\sum_{n'\in\Z}e^{\iunit\varphi n'}J_{n'-k}(2|g_{\omega}|)\overline{J_{n'}(2|g_\omega|)}}
\paren{\sum_{n\in\Z} e^{\iunit\varphi n}\rho_{n+k,n}}
\end{align*}
where we have used the substitution $l=n'+n$ in the last line. Using the identity
\begin{equation*}
\sum_{n\in \Z} e^{\iunit\tau n}\rho_{n+k,n} = \sum_{n\in \Z} e^{\iunit\tau n}(\mathcal{G}\rho)_{n,k}
=\sqrt{2\pi}({\F^{(1)}}(\mathcal{G}\rho))(\varphi,k)
\end{equation*}
and setting
\[
\widetilde{m}(\varphi,k):= \sqrt{2\pi}\sum_{n'\in \Z} e^{\iunit\varphi n'}J_{n'-k}(2|g_{\omega}|)\overline{J_{n'}(2|g_\omega|)}
\]
leads to the formula
\begin{equation}\label{Fouri1}
\left({\F^{(1,*2)}} T\rho\right)(\varphi,k) = \widetilde{m}(\varphi,k) \left({\F^{(1)}}(\mathcal{G}\rho)\right)(\varphi,k).
\end{equation}
In view of \eqref{eq:mixedFTinv} it remains to show that $\tilde{m}=m$.
%
To this end we apply the Fourier convolution theorem \eqref{eq:Fourier_convolution} to obtain
\begin{equation*}
\begin{split}
\sqrt{2\pi}\widetilde{m}(\varphi,k)
&= \perIntegral  \left(\sum_{n'\in \Z} e^{\iunit\tau n'}J_{n'-k}(2|g_{\omega}|)\right) \left(\sum_{n\in \Z} e^{\iunit(\varphi-\tau)n}\overline{J_{n}(2|g_\omega|)}\right) d\tau.\\
&= \perIntegral  e^{\iunit\tau k}\left(\sum_{n'\in \Z} e^{\iunit\tau (n'-k)}J_{n'-k}(2|g_{\omega}|)\right) \left(\overline{\sum_{n\in \Z} e^{\iunit(\tau-\varphi)n}J_{n}(2|g_\omega|)}\right) d\tau.
\end{split}
\end{equation*}
With the help of the identity $\sum_{m}e^{\iunit m\theta}J_m(z)=e^{\iunit z\sin\theta}$ for
Bessel functions (see \cite[eq.~(9.20)]{temme:96}), we obtain
\begin{align*}
\widetilde{m}(\varphi,k)
&= \frac{1}{\sqrt{2\pi}}\perIntegral  e^{\iunit\tau k}e^{2\iunit |g_{\omega}|(\sin\tau-\sin(\tau-\varphi))} d\tau\\
&=  \frac{1}{\sqrt{2\pi}}\perIntegral  e^{\iunit\tau k}e^{4\iunit |g_{\omega}|\sin\frac{\varphi}{2}\cos(\tau-\frac{\varphi}{2})} d\tau \\
&=  \frac{1}{\sqrt{2\pi}}e^{\frac{\iunit\varphi k}{2}}\perIntegral  e^{\iunit\tau k} e^{4\iunit |g_{\omega}|\sin\frac{\varphi}{2}\cos\tau} d\tau.
\end{align*}
Now the identity
 $$\perIntegral e^{\iunit^k\tau+\iunit z\cos\tau} d\tau= 2\pi \iunit^k J_{k}(z)$$
(see \cite[eq.~(9.19)]{temme:96})
shows that $\widetilde{m}=m$ and completes the proof in view of \eqref{Fouri1}.
\end{proof}

The factorization \eqref{eq:Tfactorization} leads us to a proof of the injectivity of $T$.

\begin{proof}[Proof of Theorem \ref{thUniqueness}:]
Since the Fourier-type transforms ${\F^{(1)}}$ and ${\F^{(*1,2)}}$ and the operator $\mathcal{G}$
in the factorization \eqref{eq:Tfactorization}
are all unitary, it suffices to prove the multiplication operator $M$ is injective. To this end, we notice that the multiplier $\sqrt{2\pi}\iunit^k e^{\frac{\iunit\varphi k}{2}} J_{k}(4 |g_{\omega}|\sin\frac{\varphi}{2})$ is a holomorphic function with respect to $\varphi$, so it only has isolated zeros. Therefore, if $Mf=0$ for some $f\in L^2(\perIntv\times\Z)$, we are able to infer that $f$ vanishes almost everywhere. This means $f=0$ in $L^2(\perIntv\times\Z)$, which concludes the proof.
\end{proof}

\begin{cor}[ill-posedness]\label{coro:illposedness}
The inverse of $T$ is unbounded.
\end{cor}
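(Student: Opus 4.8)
The plan is to read the ill-posedness directly off the factorization in Proposition \ref{prop:Tfactorization}. Since the injectivity from Theorem \ref{thUniqueness} only provides an inverse $T^{-1}$ on the (non-closed) range of $T$, the precise claim is that $T$ fails to be bounded below; equivalently, I would exhibit a sequence $(\rho_n)$ with $\norm{\rho_n}_\Xspace = 1$ and $\norm{T\rho_n}_\Yspace \to 0$.

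First I would use that the outer factors $\F^{(*1,2)}$, $\F^{(1)}$, and $\mathcal{G}$ in $T = \F^{(*1,2)} M_m \F^{(1)}\mathcal{G}$ are all unitary, as recorded in the proof of Theorem \ref{thUniqueness}. Hence for any $\rho$ we have $\norm{T\rho}_\Yspace = \norm{M_m \F^{(1)}\mathcal{G}\rho}$ and $\norm{\rho}_\Xspace = \norm{\F^{(1)}\mathcal{G}\rho}$, so the problem reduces to showing that the multiplication operator $M_m$ on $L^2(\perIntv\times\Z)$ is not bounded below. Writing $f = \F^{(1)}\mathcal{G}\rho$, every normalized $f\in L^2(\perIntv\times\Z)$ corresponds to a unique normalized $\rho = \mathcal{G}^{-1}(\F^{(1)})^{-1} f$, so it suffices to construct normalized $f_n$ with $\norm{M_m f_n}\to 0$.

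Next I would recall that a multiplication operator with a (here continuous) symbol $m$ is bounded below exactly when the essential infimum of $\abs{m}$ with respect to the product measure on $\perIntv\times\Z$ is strictly positive, and then show this essential infimum is zero. The cleanest route is to note that for any fixed $k\neq 0$ the symbol $m(\varphi,k) = \sqrt{2\pi}\,\iunit^k e^{\iunit\varphi k/2} J_k\paren{4\abs{g_\omega}\sin\tfrac{\varphi}{2}}$ vanishes at $\varphi = 0$, since $J_k(0)=0$. Concretely, fixing $k=1$ and letting $f_n$ be the normalized indicator of the shrinking $\varphi$-interval $(-1/n,1/n)$ in the $k=1$ slice, continuity of $m(\cdot,1)$ gives $\norm{M_m f_n}^2 \le \sup_{\abs{\varphi}<1/n}\abs{m(\varphi,1)}^2 \to \abs{m(0,1)}^2 = 0$, while $\norm{f_n}=1$.

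Pulling this singular sequence back through the unitary factors yields the desired $\rho_n$ and proves that $T^{-1}$ is unbounded. I do not expect a genuine obstacle here; the only points requiring care are (i) interpreting $T^{-1}$ as the inverse on the non-closed range of the injective operator $T$, and (ii) correctly handling the essential infimum over the mixed continuous--discrete measure, for which it suffices to display vanishing on a single $k$-slice. As a complementary way to see the unboundedness, one may instead keep $\varphi$ fixed with $\sin(\varphi/2)\neq 0$ and let $\abs{k}\to\infty$, using $J_k(z)\to 0$ for fixed $z$; this reveals that the decay of the symbol in $k$ is in fact severe, foreshadowing the strong (sub-H\"older/logarithmic) ill-posedness quantified in Theorem \ref{rates2}.
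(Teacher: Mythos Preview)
Your proposal is correct and follows essentially the same route as the paper: both reduce via the unitary factors in Proposition~\ref{prop:Tfactorization} to the multiplication operator $M_m$, and both exploit that $J_k(0)=0$ for $k\neq 0$ forces $m$ to vanish at $\varphi=0$ on every slice $k\neq 0$. The only stylistic difference is that the paper invokes the abstract criterion ``$M_{1/m}$ is bounded iff $1/m\in L^\infty$'' directly, whereas you build an explicit singular sequence of normalized indicators on shrinking intervals; your version has the mild advantage of making precise what ``$T^{-1}$ unbounded'' means for an operator with non-closed range.
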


\begin{proof}
It follows from the factorization \eqref{eq:Tfactorization} that
\[
T^{-1}= \mathcal{G}^{-1}\F^{(*1)}M_m^{-1}\F^{(1,*2)}.
\]
All operators on the right hand side are unitary except the multiplication operator $M_m^{-1} = M_{1/m}$.
A multiplication operator on $L^2$ is bounded if and only if the multiplier function is essentially bounded.
Since the Bessel function $J_k$ has zeroes at $0$ for $k\geq 1$, $1/m$ is not essentially bounded.
\end{proof}

\section{Variational source conditions}\label{sec:vsc}
In this section we review variational source conditions and verify conditions of this type,
which then imply both the
stability estimates and the convergence rates in Theorems \ref{rates1} and \ref{rates2}.


\subsection{Basic theory}

We first recall some standard regularization theory for inverse problems.
We consider a general linear ill-posed inverse problem $Tf^\dagger=\yobs$ where
$T:\Xspace\to \Yspace$ is a bounded linear operator between Hilbert spaces,
which does not have a bounded inverse. Let $\yobs$ is the noisy data with
noise level $\|\yobs-Tf^\dagger\|_{\Yspace}<\delta$.
Tikhonov regularization with constraint set $\mathcal{C}\subset\Xspace$
and regularization parameter $\alpha>0$ is given by
\begin{equation}\label{eq:TikhFunc}
\widehat{f}_{\alpha}=\argmin_{f\in\mathcal{C}}\bracket{\|Tf-\yobs\|_\Yspace^2+\alpha\|f\|_\Xspace^2}.
\end{equation}

%
%
%
%
To obtain bounds on the reconstruction error $\|\widehat{f}_{\alpha}-f^\dagger\|$,
which tend to $0$ as the noise level tends to $0$,  we need to impose conditions
on $f^{\dagger}$. Such conditions are usually referred to as
\emph{source condition}. Classically, the source condition is of the form
\[
f^\dagger=h(T^*T)\omega,\qquad\|\omega\|_{\Xspace}\leq E
\]
where $\omega \in \Xspace$ is some ``source'' and
$h:[0,\|T^*T\|]\to[0,\infty)$ is some increasing function,  which determines
the convergence rate. The usefulness of such spectral source conditions
is linked to the applicability of tools from spectral theory which
is mostly restricted to linear reconstruction procedure. Even for
linear reconstruction procedures such as unconstrained Tikhonov regularization
they are only sufficient, but not quite necessary for certain
convergence rates.
Both of these shortcomings can be overcome by the use of
source conditions in the form of variational inequalities
(see \cite{HKPS:07,Scherzer_etal:09,flemming:12,WSH:18}):

\begin{defn}[Variational Source Condition]
A function $\psi:[0,\infty)\to [0,\infty)$ is called an \emph{index function} if
it is continuous, strictly increasing, and $\psi(0)=0$.
We say that $f^\dagger$  satisfies a \emph{variational source condition} with index function $\psi$
if
\begin{equation}\label{eqVSC}
\frac{1}{4}\norm{f-f^\dagger}^2 \leq  \frac{1}{2}\norm{f}^2 - \frac{1}{2}\norm{f^\dagger}^2
+\psi\paren{\norm{T(f)-T(f^\dagger)}^2} \,\,\,\mbox{for all } f \in \Xspace.
\end{equation}
\end{defn}

Conditions of the form \eqref{eqVSC} lead to the following error bounds
in terms of  the index function $\psi$ (see \cite{grasmair:10,flemming:12}):
\begin{prop}[convergence rates with a-priori choice of $\alpha$]\label{prop:convTikh}
Consider Tikhonov regularization given by eq.~\eqref{eq:TikhFunc}.
If $f^{\dagger}\in\mathcal{C}$ satisfies a variational source condition \eqref{eqVSC} with some concave, differentiable index function $\psi$
and if the regularization parameter $\alpha$ is chosen by $\alpha=1/\psi'(4\delta^2)$, then
the following error bound holds true:
\[
\norm{\widehat{f}_{\alpha}-f^\dagger}\leq 4\sqrt{\psi(\delta^2)}.
\]
\end{prop}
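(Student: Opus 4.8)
The plan is to combine the defining minimality of the Tikhonov minimizer with the variational source condition evaluated at that minimizer, and then to exploit the concavity of $\psi$ through a tangent-line (Fenchel--Young) inequality whose slope is matched to the parameter choice $\alpha = 1/\psi'(4\delta^2)$.

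First I would use that $\widehat{f}_{\alpha}$ minimizes \eqref{eq:TikhFunc} over $\mathcal{C}$ and that $f^{\dagger}\in\mathcal{C}$, so testing the functional against $f^{\dagger}$ gives
\[
\norm{T\widehat{f}_{\alpha} - \yobs}_\Yspace^2 + \alpha\norm{\widehat{f}_{\alpha}}_\Xspace^2 \le \norm{Tf^\dagger - \yobs}_\Yspace^2 + \alpha\norm{f^\dagger}_\Xspace^2 \le \delta^2 + \alpha\norm{f^\dagger}_\Xspace^2.
\]
Dividing by $2\alpha$ bounds $\tfrac12\norm{\widehat{f}_{\alpha}}^2 - \tfrac12\norm{f^\dagger}^2$ from above by $\tfrac{1}{2\alpha}\paren{\delta^2 - \norm{T\widehat{f}_{\alpha} - \yobs}^2}$. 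Since $\widehat{f}_{\alpha}\in\mathcal{C}\subseteq\Xspace$, the variational source condition \eqref{eqVSC}, applied with $f = \widehat{f}_{\alpha}$, bounds the same difference from below by $\tfrac14\norm{\widehat{f}_{\alpha} - f^\dagger}^2 - \psi\paren{\norm{T\widehat{f}_{\alpha} - Tf^\dagger}^2}$. Combining the two yields the working inequality
\[
\frac14\norm{\widehat{f}_{\alpha} - f^\dagger}^2 \le \frac{1}{2\alpha}\paren{\delta^2 - \norm{T\widehat{f}_{\alpha} - \yobs}^2} + \psi\paren{\norm{T\widehat{f}_{\alpha} - Tf^\dagger}^2}.
\]

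Next, to relate the two residuals, I would avoid a crude triangle inequality and expand exactly. Writing $e := \yobs - Tf^\dagger$ with $\norm{e}\le\delta$ and $d := \norm{T\widehat{f}_{\alpha} - Tf^\dagger}$, one has $\norm{T\widehat{f}_{\alpha} - \yobs}^2 = d^2 - 2\scalar{T\widehat{f}_{\alpha} - Tf^\dagger}{e} + \norm{e}^2$, so that $\delta^2 - \norm{T\widehat{f}_{\alpha} - \yobs}^2 \le \delta^2 + 2d\delta - d^2$. Substituting this gives
\[
\frac14\norm{\widehat{f}_{\alpha} - f^\dagger}^2 \le \frac{1}{2\alpha}\paren{\delta^2 + 2d\delta - d^2} + \psi(d^2).
\]
The point is that this retains a favourable term $-\tfrac{d^2}{2\alpha}$, whose superlinear growth will dominate the concave, hence sublinear, term $\psi(d^2)$; after absorbing the cross term by Young's inequality $\tfrac{1}{\alpha}d\delta \le \tfrac{1}{4\alpha}d^2 + \tfrac{1}{\alpha}\delta^2$, a net negative multiple of $d^2$ remains available. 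I would then invoke concavity and differentiability of $\psi$ via the tangent-line inequality $\psi(d^2)\le\psi(t)+\psi'(t)(d^2-t)$, choosing the tangent point so that $\psi'(t)$ matches the remaining negative coefficient of $d^2$; the prescription $\alpha = 1/\psi'(4\delta^2)$ is exactly what calibrates this slope to the scale $t\sim\delta^2$. The $d^2$-terms cancel, leaving a bound in $\delta^2$ and $\psi$ near $4\delta^2$, and concavity through the origin ($\psi(\lambda s)\ge\lambda\psi(s)$ for $\lambda\in[0,1]$, in particular $\psi(4\delta^2)\le 4\psi(\delta^2)$ and $t\psi'(t)\le\psi(t)$) then lets me collect constants to reach $\tfrac14\norm{\widehat{f}_{\alpha}-f^\dagger}^2\le 4\psi(\delta^2)$, i.e. the claimed bound.

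The main obstacle is precisely this last bookkeeping of constants: one must ensure that every positive contribution --- the cross term $2d\delta$ and the linear overestimate produced by the tangent line --- is dominated by the single negative quadratic term $-\tfrac{d^2}{2\alpha}$ coming from minimality. This is delicate because the tangent-line slope $\psi'(t)$ multiplies the \emph{full} $d^2$, so a careless choice (for instance the naive bound $d\le\norm{T\widehat{f}_{\alpha}-\yobs}+\delta$) inflates the quadratic coefficient and destroys the cancellation. The exact expansion via $e=\yobs-Tf^\dagger$ together with the calibrated choice $\alpha=1/\psi'(4\delta^2)$ is what makes the estimate close with the clean constant $4$.
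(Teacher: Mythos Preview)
The paper does not actually prove this proposition: it is quoted from the literature with the sentence ``Conditions of the form \eqref{eqVSC} lead to the following error bounds \dots\ (see \cite{grasmair:10,flemming:12})'' and no proof is given. So there is no in-paper argument to compare against; one can only assess whether your sketch reproduces the standard proof from those references.

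Your overall strategy --- combine the minimality inequality with the variational source condition and then linearize $\psi$ via a tangent (Fenchel--Young) inequality, choosing the slope to cancel the residual quadratic --- is exactly the standard route taken in \cite{grasmair:10,flemming:12}. However, the step you explicitly defer (``the last bookkeeping of constants'') does \emph{not} close with the constant~$4$ under the parameter choice $\alpha=1/\psi'(4\delta^2)$ stated in the proposition. After your Young inequality the surviving negative term is $-d^{2}/(4\alpha)$, so cancelling the tangent contribution $\psi'(t)\,d^{2}$ forces $\psi'(t)\le 1/(4\alpha)=\psi'(4\delta^{2})/4$, hence $t$ strictly larger than $4\delta^{2}$; the leftover $\psi^*(1/(4\alpha))$ is then \emph{not} controlled by $\psi(4\delta^{2})$ uniformly in~$\psi$ (for $\psi(s)=s^{\nu}$ it blows up as $\nu\to 1$). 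The clean factor~$4$ falls out if one takes $\alpha=1/(4\psi'(4\delta^{2}))$: with the simpler bound $d\le r+\delta$ and the tangent at $t=4\delta^{2}$ one gets
\[
\tfrac14\norm{\widehat f_{\alpha}-f^{\dagger}}^{2}
\le \psi(4\delta^{2})-\frac{(r-\delta)^{2}}{4\alpha}
\le \psi(4\delta^{2})\le 4\,\psi(\delta^{2}),
\]
which is precisely the claim. So your plan is correct in spirit, but your final assertion that ``the calibrated choice $\alpha=1/\psi'(4\delta^{2})$ is what makes the estimate close with the clean constant $4$'' is off by a factor~$4$ in the calibration; the precise constant in the $\alpha$-rule quoted in the proposition appears to be a harmless normalization discrepancy inherited from the cited references rather than something your argument recovers.
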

Actually eq.~\eqref{eqVSC} only needs to hold for all $f\in\mathcal{C}$.
The same convergence rate can also be achieved by the discrepancy principle,
which does not require prior knowledge of the index function $\psi$
encoding properties of the unknown solution.
\begin{prop}[convergence rates with discrepancy principle]\label{prop:convTikhDisc}
Consider Tikhonov regularization given by eq.~\eqref{eq:TikhFunc}.
If $f^{\dagger}\in\mathcal{C}$ satisfies a variational source condition \eqref{eqVSC} with some concave,
differentiable index function $\psi$
and if the regularization parameter $\alpha$ is chosen according to the discrepancy principle
\eqref{eq:disc}, then the following error bound holds true:
\[
\norm{\widehat{f}_{\alpha}-f^\dagger}\leq 4(1+\tau)\sqrt{\psi(\delta^2)}.
\]
\end{prop}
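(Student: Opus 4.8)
The plan is to combine the defining minimality of the Tikhonov functional \eqref{eq:TikhFunc}, the variational source condition \eqref{eqVSC}, and the two inequalities of the discrepancy principle \eqref{eq:disc}. Write $\widehat{f}_\alpha$ for the minimizer and recall that $f^\dagger\in\mathcal{C}$, so $f^\dagger$ is an admissible competitor. First I would record the minimality estimate obtained by comparing the value of the Tikhonov functional at $\widehat{f}_\alpha$ with its value at $f^\dagger$, together with the noise bound $\|\yobs-Tf^\dagger\|\leq\delta$; after rearranging, this yields
\[
\alpha\left(\|\widehat{f}_\alpha\|^2-\|f^\dagger\|^2\right)\leq \delta^2-\|T\widehat{f}_\alpha-\yobs\|^2 .
\]

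The key step is to exploit the lower bound $\delta\leq\|T\widehat{f}_\alpha-\yobs\|$ from the discrepancy principle: it forces the right-hand side above to be nonpositive, and since $\alpha>0$ we conclude $\tfrac12\|\widehat{f}_\alpha\|^2-\tfrac12\|f^\dagger\|^2\leq 0$. Substituting this into \eqref{eqVSC} evaluated at $f=\widehat{f}_\alpha$ makes the two norm terms on the right-hand side disappear, leaving
\[
\tfrac14\|\widehat{f}_\alpha-f^\dagger\|^2\leq\psi\!\left(\|T\widehat{f}_\alpha-Tf^\dagger\|^2\right).
\]
This simplification is exactly what lets the discrepancy principle bypass explicit knowledge of $\psi$: whereas the a-priori rule $\alpha=1/\psi'(4\delta^2)$ in Proposition \ref{prop:convTikh} must \emph{balance} the two terms, here the lower discrepancy bound disposes of one of them outright.

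It remains to control the residual. By the triangle inequality with the upper discrepancy bound $\|T\widehat{f}_\alpha-\yobs\|\leq\tau\delta$ and the noise level,
\[
\|T\widehat{f}_\alpha-Tf^\dagger\|\leq\|T\widehat{f}_\alpha-\yobs\|+\|\yobs-Tf^\dagger\|\leq(1+\tau)\delta ,
\]
so that $\tfrac14\|\widehat{f}_\alpha-f^\dagger\|^2\leq\psi\!\left((1+\tau)^2\delta^2\right)$ by monotonicity of $\psi$. Finally I would invoke concavity together with $\psi(0)=0$, which gives the scaling $\psi(\lambda t)\leq\lambda\,\psi(t)$ for $\lambda\geq1$, to extract the factor $(1+\tau)^2$; taking square roots produces $\|\widehat{f}_\alpha-f^\dagger\|\leq 2(1+\tau)\sqrt{\psi(\delta^2)}$, which in particular yields the stated estimate.

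The only genuinely delicate point is not the chain of inequalities, which is routine, but the standing assumption that a parameter satisfying \eqref{eq:disc} exists. This rests on the existence of a minimizer in \eqref{eq:TikhFunc} (via weak lower semicontinuity and coercivity of the constrained functional) and on the monotonicity and continuity in $\alpha$ of the discrepancy $\|T\widehat{f}_\alpha-\yobs\|$. I would treat these as the standard well-posedness facts underlying the discrepancy principle, and I expect the main obstacle, if any, to lie in verifying these structural properties of the value function $\alpha\mapsto\widehat{f}_\alpha$ over $\mathcal{C}$ rather than in the estimate itself.
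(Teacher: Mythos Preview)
Your proof is correct and follows essentially the same steps as the paper's: minimality plus the lower discrepancy bound to kill the norm terms, the VSC at $\widehat{f}_\alpha$, the triangle inequality with the upper discrepancy bound, and concavity of $\psi$ with $\psi(0)=0$ to extract $(1+\tau)^2$. Your observation that the argument actually yields the sharper constant $2(1+\tau)$ is accurate, and your closing remark on the existence of an admissible $\alpha$ mirrors the paper's own caveat immediately following the proof.
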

\begin{proof}
The proof is adapted from {\cite[Theorem 4.3(iii)]{HM:19}}, see also
\cite{flemming:12}.
We first notice that since $\widehat{f}_{\alpha}$ is defined to be the minimizer of the Tikhonov functional \eqref{eq:TikhFunc}, it follows from \eqref{eq:disc} that
\begin{align*}
\norm{\widehat{f}_{\alpha}}^2-\norm{f^\dagger}^2
&\leq\frac{1}{\alpha} \left(\norm{Tf^\dagger-\yobs}_{\Yspace}^2-\norm{T\widehat{f}_{\alpha}-\yobs}_{\Yspace}^2\right)\\
&\leq \frac{1}{\alpha} \left(\delta^2-\delta^2\right)=0.
\end{align*}
Combining this inequality with \eqref{eqVSC} yields
\[
\frac{1}{4}\norm{\widehat{f}_{\alpha}-f^\dagger}^2 \leq \psi\paren{\norm{T\widehat{f}_{\alpha}-Tf^\dagger}^2}.
\]
As
\begin{align*}
\norm{T\widehat{f}_{\alpha}-Tf^\dagger}&\leq\norm{T\widehat{f}_{\alpha}-\yobs}+\norm{\yobs-Tf^\dagger}
\leq \tau\delta+\delta,
\end{align*}
and $\psi$ is monotonically increasing, we obtain $\|\widehat{f}_{\alpha}-f^\dagger\|^2\leq \psi\paren{(1+\tau)^2\delta^2}$.
Now the proof is completed by noting that $\psi\paren{(1+\tau)^2\delta^2}\leq (1+\tau)^2\psi\paren{\delta^2}$ as $\psi$
is concave with $\psi(0)=0$.
\end{proof}
Here we do not address the question whether a parameter $\alpha>0$ satisfying
\eqref{eq:disc} exists. We refer to \cite{AHM:14} for the so-called sequential
discrepancy principle, which determines $\alpha$ by an explicit
algorithm for which similar error bounds can be shown.

\begin{rem}[{\cite[Eq. 6]{HohWei15}}]\label{rem:VSCstab}
If the variational source condition \eqref{eqVSC} is satisfied for all
$f^{\dagger}$ in some subset $\mathcal{K}\subset\Xspace$, then
the conditional stability estimate
\[
\|f_1-f_2\|\leq 2\sqrt{\psi\left(\|Tf_1-Tf_2\|^2\right)},
\]
holds true for all $f_1,f_2\in \mathcal{K}$.
\end{rem}

To verify variational source conditions for our problem, we will check the
sufficient conditions in the following lemma, which is a special case
of \cite[Theorem 2.1]{HW:17} (where a different scaling is used such that
the index function $\psi$ in \cite{HW:17} is twice the Bessel function $\psi$ here):
\begin{lemma}[Verification of VSCs]\label{le:VSC}
Let $\Xspace$ and $\Yspace$ be Hilbert spaces and $V_{\varepsilon} \in \Xspace$ be a family of subspaces. Suppose that there exists a family of orthogonal projection operators $P_{\varepsilon}: \Xspace \rightarrow V_{\varepsilon}$ for $\varepsilon$ in some index set $\mathcal{I}\subset (0,\infty)$,
and there exist families $(\kappa_{\varepsilon})_{\varepsilon\in \mathcal{I}}$, $(\sigma_{\varepsilon})_{\varepsilon\in\mathcal{I}}$ of positive numbers, such that the following conditions hold true:
\begin{itemize}
\item $\|(I-P_{\varepsilon}) f^\dagger\|_{\Xspace} \leq \kappa_{\varepsilon}$ for all
$\varepsilon\in\mathcal{I}$;
\item  $\inf_{\varepsilon\in\mathcal{I}} \kappa_{\varepsilon}=0$;
\item For all $f\in \Xspace$ and all $\varepsilon\in\mathcal{I}$
we have
\[\langle P_{\varepsilon} f^\dagger, f^\dagger-f \rangle \leq \sigma_{\varepsilon} \|T(f^\dagger)-T(f)\|_\Yspace.
 \]
\end{itemize}
Then the true solution $f^\dagger$ satisfies the variational source condition \eqref{eqVSC} with the index function
\begin{align}\label{eq:psiVSC}
\psi(\tau) := \inf_{\varepsilon\in\mathcal{I}}[\sigma_{\varepsilon}\sqrt{\tau}+ \kappa_{\varepsilon}^2].
\end{align}
\end{lemma}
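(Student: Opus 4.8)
The plan is to reduce the variational inequality \eqref{eqVSC} to a single inner-product estimate and then feed in the three hypotheses one at a time. First I would rewrite the quadratic part of the right-hand side of \eqref{eqVSC} by means of the polarization identity
\[
\tfrac{1}{2}\norm{f^\dagger}^2 - \tfrac{1}{2}\norm{f}^2 = \Re\scalar{f^\dagger}{f^\dagger-f} - \tfrac{1}{2}\norm{f^\dagger-f}^2 .
\]
Substituting this into \eqref{eqVSC} and cancelling the common term $\tfrac12\norm{f^\dagger-f}^2$ shows that \eqref{eqVSC} is equivalent to the estimate
\[
\Re\scalar{f^\dagger}{f^\dagger-f} - \tfrac{1}{4}\norm{f^\dagger-f}^2 \leq \psi\paren{\norm{T(f^\dagger)-T(f)}^2}
\]
for all $f\in\Xspace$ (real parts being understood in the complex case). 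This is the form in which the projections enter naturally.

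Next, for a fixed $\varepsilon\in\mathcal{I}$ I would split $f^\dagger = P_\varepsilon f^\dagger + (I-P_\varepsilon)f^\dagger$ inside the inner product. The component along $V_\varepsilon$ is controlled by the third hypothesis, giving $\Re\scalar{P_\varepsilon f^\dagger}{f^\dagger-f} \leq \sigma_\varepsilon \norm{T(f^\dagger)-T(f)}$, while the orthogonal component is controlled by Cauchy--Schwarz together with the first hypothesis, giving $\Re\scalar{(I-P_\varepsilon)f^\dagger}{f^\dagger-f} \leq \kappa_\varepsilon \norm{f^\dagger-f}$. Adding these and inserting them into the reduced estimate produces the $\varepsilon$-dependent bound $\sigma_\varepsilon\norm{T(f^\dagger)-T(f)} + \kappa_\varepsilon\norm{f^\dagger-f} - \tfrac14\norm{f^\dagger-f}^2$.

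The decisive step is to absorb the Cauchy--Schwarz contribution into the negative quadratic term. Writing $t := \norm{f^\dagger-f}$ and completing the square, $\kappa_\varepsilon t - \tfrac14 t^2 = \kappa_\varepsilon^2 - \tfrac14(t-2\kappa_\varepsilon)^2 \leq \kappa_\varepsilon^2$, which eliminates the dependence on $f$ there and leaves the bound $\sigma_\varepsilon\norm{T(f^\dagger)-T(f)} + \kappa_\varepsilon^2$. Since this holds for every $\varepsilon\in\mathcal{I}$, taking the infimum over $\varepsilon$ and using $\norm{T(f^\dagger)-T(f)} = \sqrt{\norm{T(f^\dagger)-T(f)}^2}$ reproduces exactly $\psi\paren{\norm{T(f^\dagger)-T(f)}^2}$ as defined in \eqref{eq:psiVSC}, which establishes the inequality.

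Finally I would verify that $\psi$ is a genuine index function. Here $\psi(0)=\inf_\varepsilon \kappa_\varepsilon^2 = \paren{\inf_\varepsilon\kappa_\varepsilon}^2 = 0$ by the second hypothesis, and $\psi$ is concave as a pointwise infimum of the concave functions $\tau\mapsto \sigma_\varepsilon\sqrt{\tau}+\kappa_\varepsilon^2$, hence continuous on $(0,\infty)$ and non-decreasing. The inner-product inequality itself is essentially mechanical once the polarization rewriting and the square completion are in place, so the main obstacle I anticipate is the \emph{regularity} of $\psi$: upgrading monotonicity to strict monotonicity and securing continuity at $\tau=0$. In the concrete settings of Theorems \ref{rates1} and \ref{rates2} the function $\psi$ is computed explicitly and these properties are transparent; in the abstract statement they follow from concavity together with a mild non-degeneracy of the families $(\sigma_\varepsilon)$, $(\kappa_\varepsilon)$, and in any case one may pass to a strictly increasing concave minorant of $\psi$, which only strengthens the inequality.
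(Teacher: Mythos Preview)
The paper does not actually prove this lemma; it simply states it as ``a special case of \cite[Theorem 2.1]{HW:17}'' and moves on. Your argument is correct and is in fact the standard one behind that cited result: the polarization rewrite, the splitting $f^\dagger=P_\varepsilon f^\dagger+(I-P_\varepsilon)f^\dagger$, Cauchy--Schwarz on the second piece, and the square completion $\kappa_\varepsilon t-\tfrac14 t^2\le \kappa_\varepsilon^2$ are exactly the ingredients used in \cite{HW:17}. Your caveat about $\psi$ being only non-decreasing rather than strictly increasing is accurate and is also handled in the literature precisely as you suggest, by passing to a strictly increasing concave majorant/minorant when needed; for the applications in this paper the explicit forms of $\psi$ make the issue moot.
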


\subsection{Tools for the verification of variational source conditions for $T$}

In order to verify the VSC for our forward operator $T$, we first look at the decomposition in Proposition~\ref{prop:Tfactorization}. The fact that ${\F^{(1)}},{\F^{(1,*2)}}$ and $\mathcal{G}$ are all unitary operators implies that, in order to analyze the properties of the operator $T$, it suffices to analyze the properties of the multiplication operator $M_m$ as defined in \eqref{eq:Multiplier}. We write the forward problem $T\rho^\dagger=\yobs$ as
\[
M_mf^\dagger=\F^{(1,*2)}\yobs,
\]
where $f^\dagger$ is defined as
\begin{equation}\label{eq:function-f}
f^\dagger:={\F^{(1)}}\mathcal{G}\rho^\dagger\in L^2(\perIntv\times\Z),
\end{equation}
and will verify the three conditions in Lemma~\ref{le:VSC} for the multiplication operator $M_m$ and the true solution $f^\dagger$.

For any $\varepsilon>0$, we define the sublevel sets
\begin{align}\label{def:IntervalI}
I_{\varepsilon}&:=\{(\varphi,k)\in\Omega:|m(\varphi,k)|< \varepsilon\},\\
I_{k,\varepsilon}&:=\left\{\varphi\in\perIntv\middle|\,|m(\varphi, k)|< \varepsilon\right\}
\end{align}
with $\Omega:=\perIntv\times \Z$ such that $|I_{\varepsilon}| = \sum_{k\in\Z} |I_{k,\varepsilon}|$.
We choose $P_\varepsilon$ as orthogonal projections  from $L^2(\Omega)$ to
$L^2(\Omega\setminus I_\varepsilon)$.
Obviously, $P_{\varepsilon}$ can be written as a multiplication operator
\begin{equation}\label{eq:defi_Pepsi}
P_{\varepsilon} f = (1-\chi_{I_\varepsilon})f
\end{equation}
with the characteristic function $\chi_{I_\varepsilon}$ of $I_{\varepsilon}$.

To bound the sizes of the sublevel sets $I_{k,\varepsilon}$ of $m(\cdot,k)$,
we recall some properties of the Bessel functions
$J_k$ which can be found in \cite[Ch.~9]{temme:96}: 
\begin{enumerate}
  \item $J_{-k}(z)=(-1)^kJ_k(z)$. Hence, it suffices to look at the case $k\geq0$.
  \item For any $k>0$, $J_{k}$ has a zero of order $k$ at $z=0$. Around this zero it has the asymptotic behavior $J_{k}(z)= \frac{z^{k}}{2^{k}k!}(1+\calO(|z|))$ as $|z|\to 0$.
  \item $J_k$ also possesses an infinite number of simple zeroes $0<j_{k,1}<j_{k,2}<\dots$ on the positive real axis, and $\lim_{l\to\infty}j_{k,l}=\infty$ for all $k\in\Z$.
  \item The positions of the positive zeroes tend to infinity as $k\to+\infty$, i.e.\
	$\lim_{k\to+\infty} j_{k,1}=+\infty$.
\end{enumerate}
These properties translate into the following facts on $m(\cdot,k)$ and its
zero set $I_{k,0}$:
\begin{lemma}\label{le:mProperties}
Let $m(\varphi,k)$ be defined as in \eqref{eq:Multiplier}. Then we have the following:
\begin{enumerate}
  \item $|m(\varphi,-k)|=|m(\varphi,k)|$ and $I_{k,0}=I_{-k,0}$ for all $k\in\Z$ and
	$\varphi\in \perIntv$.
  \item $0\in I_{k,0}$ for all $k>0$, and
	$|m(\varphi,k)| = \frac{|g_\omega|^k\varphi^{k}}{k!}(1+\calO(|\varphi|))$ as $|\varphi|\to 0$.
  \item $I_{k,0}\setminus \{0\}$ is a finite set of simple zeros of $m(\cdot,k)$
	for all $k\in \Z$.
	\item There exists $K(g_{\omega})\in\N$ such that $I_{k,0}=\{0\}$ for all
	$|k|\geq K(g_{\omega})$.
\end{enumerate}
\end{lemma}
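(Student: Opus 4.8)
The plan is to reduce all four assertions to the four listed facts about the Bessel functions $J_k$ by exploiting that the prefactors in \eqref{eq:Multiplier} are unimodular. Since $|\iunit^k|=1$ and $|e^{\iunit\varphi k/2}|=1$ for real $\varphi,k$, we have
\begin{equation*}
|m(\varphi,k)| = \sqrt{2\pi}\,\abs{J_k(h(\varphi))}, \qquad h(\varphi):= 4|g_\omega|\sin\tfrac{\varphi}{2},
\end{equation*}
so that $I_{k,0}$ is precisely the preimage under $h$ of the zero set of $J_k$, intersected with $\perIntv$. The auxiliary map $h$ is the object I would study: it satisfies $h(0)=0$, is a strictly increasing bijection from $\perIntv$ onto $[-4|g_\omega|,4|g_\omega|]$ with $h'(\varphi)=2|g_\omega|\cos\tfrac{\varphi}{2}$, which is strictly positive on $(-\pi,\pi)$ and vanishes only at the endpoints $\varphi=\pm\pi$, and near the origin it expands as $h(\varphi)=2|g_\omega|\varphi\,(1+\calO(\varphi^2))$.

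Parts 1 and 2 are then immediate. For part 1 I would use $J_{-k}=(-1)^kJ_k$, which gives $\abs{J_{-k}(h(\varphi))}=\abs{J_k(h(\varphi))}$, hence $|m(\varphi,-k)|=|m(\varphi,k)|$ and consequently $I_{-k,0}=I_{k,0}$. For part 2, $h(0)=0$ together with $J_k(0)=0$ for $k>0$ yields $m(0,k)=0$, i.e.\ $0\in I_{k,0}$; substituting the small-argument asymptotic $J_k(z)=\tfrac{z^k}{2^kk!}(1+\calO(|z|))$ and $h(\varphi)=2|g_\omega|\varphi(1+\calO(\varphi^2))$ produces the claimed leading order $|m(\varphi,k)|=\tfrac{|g_\omega|^k|\varphi|^k}{k!}(1+\calO(|\varphi|))$ (up to the constant $\sqrt{2\pi}$ from the normalization).

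For part 3, a point $\varphi_0\in I_{k,0}\setminus\{0\}$ is exactly a point where $h(\varphi_0)$ is a nonzero zero of $J_k$; since $h(\perIntv)\subset[-4|g_\omega|,4|g_\omega|]$ and $J_k$ is a nonzero entire function (equivalently, by the third Bessel fact its positive zeros are discrete and tend to infinity), there are only finitely many such zeros in this compact interval, so $I_{k,0}\setminus\{0\}$ is finite. Simplicity I would get from the chain rule: at such a $\varphi_0$ the first summand of
\begin{equation*}
\frac{\partial m}{\partial\varphi}(\varphi,k)=\sqrt{2\pi}\,\iunit^k e^{\iunit\varphi k/2}\left[\tfrac{\iunit k}{2}J_k(h(\varphi))+J_k'(h(\varphi))\,h'(\varphi)\right]
\end{equation*}
vanishes, while $J_k'(h(\varphi_0))\neq 0$ because the nonzero Bessel zeros are simple and $h'(\varphi_0)\neq 0$ provided $\varphi_0\neq\pm\pi$; hence $m(\cdot,k)$ has a simple zero at $\varphi_0$. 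The one delicate point, which I expect to be the main obstacle, is the endpoint case $\varphi_0=\pm\pi$, occurring exactly when $4|g_\omega|$ coincides with a zero of $J_k$: there $h'=0$ and the induced zero of $m(\cdot,k)$ is of order $2$ rather than simple. This must be either excluded (it happens only for the discrete set of couplings with $4|g_\omega|\in\{j_{k,l}\}$) or absorbed by observing that the zero is nonetheless isolated of bounded order, so that $I_{k,0}\setminus\{0\}$ stays finite and the later sublevel-set estimates remain valid.

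Finally, part 4 follows from the fourth Bessel fact $j_{k,1}\to\infty$ as $k\to+\infty$: I would choose $K(g_\omega)\in\N$ so large that $j_{k,1}>4|g_\omega|$ for all $k\geq K(g_\omega)$. Then $J_k$ has no nonzero zero in $[-4|g_\omega|,4|g_\omega|]$, so its only zero in the range of $h$ is the order-$k$ zero at the origin, giving $I_{k,0}=\{0\}$; part 1 extends this to $k\leq -K(g_\omega)$, which completes the proof.
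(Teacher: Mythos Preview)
Your proof is correct and follows the same route as the paper's (very terse) argument, which simply declares parts 1--3 to be immediate consequences of the listed Bessel properties and handles part 4 exactly as you do via $j_{k,1}\to\infty$. Your treatment is in fact more careful than the paper's: you correctly flag the missing $\sqrt{2\pi}$ factor in the asymptotic of part~2 and the endpoint degeneracy at $\varphi=\pm\pi$ in part~3 (where $h'$ vanishes), both of which the paper glosses over but neither of which affects the downstream sublevel-set estimates.
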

\begin{proof}
In view of the expression \eqref{eq:Multiplier} for $m$,
the first three statements are immediate consequences of the first three properties of $J_k(z)$.
For the last statement, note that $I_{k,0}=\{0\}\Leftrightarrow j_{k,1}\leq 4 |g_{\omega}|$.
As $\lim_{k\to+\infty} j_{k,1}=+\infty$, there exists $K(g_{\omega})$ such that
$j_{k,1}> 4 |g_{\omega}|$ for all $|k|\geq K(g_{\omega})$.
\end{proof}


We further have the following properties of $I_{k,\varepsilon}$ as $\varepsilon\to0$:
\begin{lemma}\label{le:IfkLength}
\begin{enumerate}
\item For all $k\in\Z$ there exists $\varepsilon_0(k)>0$ such that for all $0<\varepsilon\leq \varepsilon_0(k)$  each connected component of $I_{k,\varepsilon}$ contains exactly one point of $I_{k,0}$.
  \item For $k\neq0$, the connected component of $I_{k,\varepsilon}$ containing $0$ has
	size  $\frac{k}{e|g_\omega|}\varepsilon^{1/|k|}(1+\calO(1))$ as $\varepsilon\to 0$.
  \item For all $|k|<K(g_{\omega})$, the finitely many connected components of $I_{k,\varepsilon}$
	not containing $0$ are of size $\calO(\varepsilon)$ as $\varepsilon\to 0$.
  \item There exists a constant $C_I>0$ depending only on $|g_{\omega}|$ such that
\begin{subequations}\label{eqs:sizeI}
	\begin{align}
	\label{eq:sizeIk}
	&|I_{k,\varepsilon}|\leq \min\paren{C_Ik\varepsilon^{1/|k|},2\pi} 
	+ \calO(\varepsilon),\qquad \mbox{for }k\in\Z.\\
	\end{align}
	\end{subequations}
\end{enumerate}
\end{lemma}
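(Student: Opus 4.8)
The plan is to treat the four statements together, reducing first to $k\geq 0$ since Lemma~\ref{le:mProperties}(1) gives $|m(\cdot,-k)|=|m(\cdot,k)|$ and hence $I_{-k,\varepsilon}=I_{k,\varepsilon}$. The analytic fact I would use throughout is that $\varphi\mapsto m(\varphi,k)=\sqrt{2\pi}\,\iunit^k e^{\iunit\varphi k/2}J_k(4|g_\omega|\sin\frac{\varphi}{2})$ is real-analytic on $\perIntv$ and not identically zero, so by Lemma~\ref{le:mProperties}(3)--(4) its zero set $I_{k,0}$ is finite, and on any compact set disjoint from $I_{k,0}$ the modulus $|m(\cdot,k)|$ is bounded below by a positive constant.

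For statement~(1) I would fix disjoint open neighborhoods of the finitely many points of $I_{k,0}$, small enough that inside each one $|m(\cdot,k)|$ is monotone on either side of its single zero (using the local forms of Lemma~\ref{le:mProperties}(2)--(3)). On the compact complement $K$ of these neighborhoods $|m(\cdot,k)|\geq c_0>0$, so choosing $\varepsilon_0(k)<c_0$ forces $I_{k,\varepsilon}\subset\bigcup(\text{neighborhoods})$ for $\varepsilon\leq\varepsilon_0(k)$; the monotonicity then makes $I_{k,\varepsilon}$ a single interval around each zero, giving exactly one point of $I_{k,0}$ per component.

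For statements~(2) and~(3) I would invert the local asymptotics. Near $0$ (for $k\neq 0$) Lemma~\ref{le:mProperties}(2) gives $|m(\varphi,k)|=\frac{|g_\omega|^k|\varphi|^k}{k!}(1+\calO(|\varphi|))$; solving $|m(\varphi,k)|=\varepsilon$ yields the component endpoint $|\varphi|=\frac{(k!)^{1/k}}{|g_\omega|}\varepsilon^{1/k}(1+o(1))$, and Stirling's formula $(k!)^{1/k}\sim k/e$ converts this into the stated $\frac{k}{e|g_\omega|}\varepsilon^{1/|k|}$. At a nonzero simple zero $\varphi_0$ (which, by Lemma~\ref{le:mProperties}(3)--(4), occur only for $|k|<K(g_\omega)$ and are finite in number) the analytic expansion gives $|m(\varphi,k)|=|m'(\varphi_0,k)|\,|\varphi-\varphi_0|(1+\calO(|\varphi-\varphi_0|))$ with $m'(\varphi_0,k)\neq 0$, so the component has half-width $\varepsilon/|m'(\varphi_0,k)|+\calO(\varepsilon^2)$, i.e.\ size $\calO(\varepsilon)$; the same linear analysis covers all components of $I_{0,\varepsilon}$ when $k=0$, where $0\notin I_{0,0}$.

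Statement~(4) then follows by summing component sizes: the central component contributes the $\calO(\varepsilon^{1/|k|})$ term, the finitely many off-centre components (present only for $|k|<K(g_\omega)$) contribute the $\calO(\varepsilon)$ term, and $|I_{k,\varepsilon}|\leq 2\pi$ is trivial. The hard part is making the constant $C_I$ in $|I_{k,\varepsilon}|\leq C_I k\varepsilon^{1/|k|}$ \emph{uniform in $k$}: the error term in the asymptotic of~(2) depends on $k$, so I cannot simply feed~(2) into~(4). To fix this I would replace the asymptotic by a genuine lower bound valid for large $k$: since $z:=4|g_\omega|\sin\frac{\varphi}{2}$ stays in $[-4|g_\omega|,4|g_\omega|]$ while $k\to\infty$, the Bessel series $J_k(z)=\frac{(z/2)^k}{k!}\paren{1-\frac{(z/2)^2}{k+1}+\cdots}$ has bracket $\geq\tfrac{1}{2}$ once $k$ exceeds some $K'(g_\omega)$, which together with Jordan's inequality $\sin\frac{\varphi}{2}\geq\frac{\varphi}{\pi}$ yields $|m(\varphi,k)|\geq c\,\frac{(2|g_\omega||\varphi|/\pi)^k}{k!}$ with $c>0$ absolute. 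Inverting this bound and using $(k!)^{1/k}\leq k$ produces $|I_{k,\varepsilon}|\leq\frac{\pi}{|g_\omega|}k\,\varepsilon^{1/k}$ uniformly for $k\geq K'(g_\omega)$, while the finitely many remaining $k$ are absorbed into $C_I$ and into the $\calO(\varepsilon)$ term, completing the proof.
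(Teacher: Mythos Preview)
Your proof is correct and follows essentially the same approach as the paper: part~1 via compactness plus local monotonicity, parts~2 and~3 by inverting the local expansions of Lemma~\ref{le:mProperties} together with the Stirling approximation $(k!)^{1/k}\sim k/e$, and part~4 by summing the component sizes from parts~1--3. Your explicit argument for the uniformity of $C_I$ in $k$ (via the alternating Bessel series and Jordan's inequality for all large $k$) is in fact more careful than the paper's proof, which simply asserts that part~4 follows from parts~1--3 without addressing the $k$-dependence of the implied constants.
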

\begin{proof}
1.) Since $m(\cdot,k)$ has only finitely many zeros of finite order, there exists
$\varepsilon_0(k)$ such that $m(\cdot,k)$ is monotonic (or even and monotonic on $[0,\pi]$ in case of
$m(\cdot,2k)$ around $\varphi=0$)
on all connected components of $\Omega_{\varepsilon_0(k)}$
viewed as subset of $\R/(2\pi\Z)$. Since $\perIntv\setminus \Omega_{\varepsilon_0(k)}$ is bounded,
$|m(\cdot,k)|$ attains its infimum on the closure of this set, and the infimum is positive.
Thus we obtain the claim by possibly reducing $\varepsilon_0(k)$. \\
2,3) This follows from Lemma \ref{le:mProperties}, parts 2 and 3, respectively.
In part 2 we also use the Stirling approximation
$(k!)^{1/k}= \frac{k}{e}\paren{1+o(1)}$ as $k\to\infty$.\\
4) This follows from parts 1--3.
\end{proof}

Moreover, we need a bound on the supremum norm of $f^{\dagger}$:
\begin{lemma}\label{Lemma:F1GRhoBounded}
Let $\rho$ be an arbitrary density matrix, i.e.\ $\rho$ is self-adjoint and positive semi-definite, with trace equal to 1. Let the operators ${\F^{(1)}}$ and $\mathcal{G}$ be defined as in \eqref{FourierTransformF1} and \eqref{operatorG}, respectively. Then
\begin{equation*}
\left\|f^{\dagger}\right\|_{L^{\infty}} =
\left\|{\F^{(1)}}\mathcal{G}\rho\right\|_{L^\infty} \leq\frac{1}{\sqrt{2\pi}}.
\end{equation*}
\end{lemma}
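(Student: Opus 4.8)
The plan is to reduce the $L^{\infty}$-bound to a phase-independent estimate on the absolute diagonal sums of $\rho$, and then to close that estimate using positive semidefiniteness together with the trace normalization. First I would use the pointwise series representation of $\F^{(1)}\mathcal{G}\rho$ that was already extracted in the proof of Proposition~\ref{prop:Tfactorization}, namely
\[
f^{\dagger}(\varphi,k)=(\F^{(1)}\mathcal{G}\rho)(\varphi,k)=\frac{1}{\sqrt{2\pi}}\sum_{n\in\Z}e^{\iunit\varphi n}\rho_{n+k,n}.
\]
Since the prefactor is exactly $\frac{1}{\sqrt{2\pi}}$, it suffices to show $\abs{\sum_{n\in\Z}e^{\iunit\varphi n}\rho_{n+k,n}}\leq 1$ uniformly in $(\varphi,k)\in\perIntv\times\Z$, and then take the supremum.

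Next I would discard the phases by the triangle inequality,
\[
\abs{\sum_{n\in\Z}e^{\iunit\varphi n}\rho_{n+k,n}}\leq\sum_{n\in\Z}\abs{\rho_{n+k,n}},
\]
which removes all $\varphi$-dependence and reduces the problem to bounding the $\ell^1$-mass of the $k$-th diagonal of $\rho$. This is the point where positive semidefiniteness enters: for each pair $(m,n)$ the $2\times2$ principal submatrix $\left(\begin{smallmatrix}\rho_{m,m}&\rho_{m,n}\\\rho_{n,m}&\rho_{n,n}\end{smallmatrix}\right)$ is itself positive semidefinite, so its determinant is nonnegative and $\abs{\rho_{m,n}}\leq\sqrt{\rho_{m,m}\rho_{n,n}}$. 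Applying this with $m=n+k$ gives
\[
\sum_{n\in\Z}\abs{\rho_{n+k,n}}\leq\sum_{n\in\Z}\sqrt{\rho_{n+k,n+k}}\,\sqrt{\rho_{n,n}}.
\]

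Finally the trace constraint closes the estimate. By the Cauchy--Schwarz inequality in $\ell^2(\Z)$ applied to the nonnegative sequences $\bigl(\sqrt{\rho_{n+k,n+k}}\bigr)_n$ and $\bigl(\sqrt{\rho_{n,n}}\bigr)_n$,
\[
\sum_{n\in\Z}\sqrt{\rho_{n+k,n+k}}\,\sqrt{\rho_{n,n}}\leq\paren{\sum_{n\in\Z}\rho_{n+k,n+k}}^{1/2}\paren{\sum_{n\in\Z}\rho_{n,n}}^{1/2}=\trace(\rho)=1,
\]
where the first factor equals $\trace(\rho)$ after the index shift $n\mapsto n+k$ and the second equals $\trace(\rho)$ directly. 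Chaining the four displays yields $\abs{f^{\dagger}(\varphi,k)}\leq\frac{1}{\sqrt{2\pi}}$ for every $(\varphi,k)$, hence $\norm{f^{\dagger}}_{L^{\infty}}\leq\frac{1}{\sqrt{2\pi}}$. There is no genuine obstacle in this argument; the only points requiring care are reading off the series representation of $\F^{(1)}\mathcal{G}\rho$ correctly from the convolution computation and keeping track of the index shift so that both Cauchy--Schwarz factors are honestly $\trace(\rho)$. It is worth emphasizing that both hypotheses on $\rho$ are used essentially: positive semidefiniteness supplies the entrywise bound $\abs{\rho_{m,n}}\leq\sqrt{\rho_{m,m}\rho_{n,n}}$, while the normalization $\trace(\rho)=1$ pins down the value of the resulting bound.
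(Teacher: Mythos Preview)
Your proof is correct and follows essentially the same route as the paper: series representation of $\F^{(1)}\mathcal{G}\rho$, triangle inequality, the $2\times 2$ principal-minor bound $|\rho_{m,n}|\le\sqrt{\rho_{m,m}\rho_{n,n}}$ from positive semidefiniteness, and closure via $\trace(\rho)=1$. The only cosmetic difference is the last step: the paper applies the AM--GM (Young's) inequality termwise, $\sqrt{\rho_{n,n}\rho_{n+k,n+k}}\le\tfrac12(\rho_{n,n}+\rho_{n+k,n+k})$, before summing, whereas you apply Cauchy--Schwarz directly to the sum; both yield the same bound of $1$.
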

\begin{proof}
Using the definitions of ${\F^{(1)}}$ and $\mathcal{G}$, we can see that
\begin{equation*}
\left|{\F^{(1)}}\mathcal{G}\rho(\varphi,k)\right|=\frac{1}{\sqrt{2\pi}}\left|\sum_{n\in\Z}e^{i\varphi n}\rho_{n+k,n}\right|
\leq\frac{1}{\sqrt{2\pi}}\sum_{n\in\Z}\left|\rho_{n+k,n}\right|
\end{equation*}
for all $\varphi\in\perIntv$ and all $k\in\Z$.
The fact that $\rho$ is positive semi-definite implies that for all $n,k\in\Z$, the principal submatrix
\[
\left(
  \begin{array}{cc}
    \rho_{n,n} & \rho_{n,n+k} \\
    \rho_{n+k,n} & \rho_{n+k,n+k} \\
  \end{array}
\right)
\]
is also positive semi-definite. Therefore, calculating the determinant of this submatrix yields that
\[
0\leq\det\left(
  \begin{array}{cc}
    \rho_{n,n} & \rho_{n,n+k} \\
    \rho_{n+k,n} & \rho_{n+k,n+k} \\
  \end{array}
\right)
=\rho_{n,n}\rho_{n+k,n+k}-|\rho_{n+k,n}|^2,
\]
so we conclude using Young's inequality that
\[
|\rho_{n+k,n}|\leq\sqrt{\rho_{n,n}\rho_{n+k,n+k}}\leq\frac12\left(\rho_{n,n}+\rho_{n+k,n+k}\right)
\]
holds for all $n,k\in\Z$. This in turn implies that
\begin{equation*}
\sum_{n\in\Z}\left|\rho_{n+k,n}\right|\leq\sum_{n\in\Z}\frac12\left(\rho_{n,n}+\rho_{n+k,n+k}\right)
=\frac12\left(\tr \rho+\tr \rho\right)=1,
\end{equation*}
which concludes the proof.
\end{proof}

Note that the third condition in Lemma~\ref{le:VSC} reduces to 
$\langle P_\varepsilon f^\dagger, \tilde{f} \rangle \leq \sigma_\varepsilon \|M_m \tilde{f}\|$ for all $\tilde{f}\in L^2(\Omega)$. Using the Cauchy-Schwarz inequality and elementary
estimates this condition can easily be verified with $\sigma_\varepsilon=\varepsilon^{-1}\|f^\dagger\|_{L^2}$.
However, the following more elaborate argument along the lines of \cite[Theorem 3.1]{HW:17}
gives a sharper bound, which is optimal in some sense (see *****):
\begin{lemma}\label{lem:sigmaeps}
Suppose the first condition in Lemma~\ref{le:VSC} holds true for the projection operators
defined in \eqref{eq:defi_Pepsi} and that $\varepsilon\mapsto
\kappa_{\varepsilon}\varepsilon^{\nu-1}$ is decreasing for some $\nu\in (0,1)$.
Then the third condition holds true with
\[
\sigma_{\varepsilon} = \frac{\kappa_{\varepsilon}}{\sqrt{6\nu}\varepsilon}\,.
\]
\end{lemma}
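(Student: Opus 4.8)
The plan is to use the reduction already recorded before the statement: since $P_\varepsilon$ is multiplication by $1-\chi_{I_\varepsilon}=\chi_{\Omega\setminus I_\varepsilon}$, it suffices to prove
\[
\langle P_\varepsilon f^\dagger,\tilde f\rangle=\int_{\Omega\setminus I_\varepsilon}\overline{f^\dagger}\,\tilde f\le\sigma_\varepsilon\,\norm{M_m\tilde f}\qquad\text{for all }\tilde f\in L^2(\Omega).
\]
On $\Omega\setminus I_\varepsilon$ we have $|m|\ge\varepsilon>0$, so I would factor the integrand as $\overline{f^\dagger}\,\tilde f=\frac{\overline{f^\dagger}}{|m|}\cdot\bigl(|m|\tilde f\bigr)$ and apply the Cauchy--Schwarz inequality; this factorisation, rather than the crude pointwise bound $|\tilde f|\le\varepsilon^{-1}|m\tilde f|$ leading to $\sigma_\varepsilon=\varepsilon^{-1}\norm{f^\dagger}_{L^2}$, is what produces a sharper constant. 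Since $\int_{\Omega\setminus I_\varepsilon}|m|^2|\tilde f|^2\le\norm{M_m\tilde f}^2$, the entire claim reduces to the weighted estimate
\[
A_\varepsilon:=\int_{\Omega\setminus I_\varepsilon}\frac{|f^\dagger|^2}{|m|^2}\le\frac{\kappa_\varepsilon^2}{6\nu\,\varepsilon^2}=\sigma_\varepsilon^2 .
\]

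To bound $A_\varepsilon$ I would pass to the distribution function of $|m|$ under the measure $|f^\dagger|^2\,dx$. Setting $G(\lambda):=\int_{\{|m|<\lambda\}}|f^\dagger|^2=\norm{(I-P_\lambda)f^\dagger}^2$, the first hypothesis of Lemma~\ref{le:VSC} reads $G(\lambda)\le\kappa_\lambda^2$ for every $\lambda$. Writing $A_\varepsilon=\int_{[\varepsilon,\infty)}\lambda^{-2}\,dG(\lambda)$ as a Stieltjes integral (equivalently via the layer--cake formula) and integrating by parts transfers the derivative from the poorly controlled measure $dG$ onto $\lambda^{-2}$, yielding $A_\varepsilon\le2\int_\varepsilon^\infty\lambda^{-3}G(\lambda)\,d\lambda$; the boundary term at $\lambda=\varepsilon$ equals $-\varepsilon^{-2}G(\varepsilon)\le0$ and the one at $\lambda\to\infty$ vanishes because $G$ is bounded. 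Inserting the decay of $\kappa_\lambda$ encoded in the monotonicity hypothesis --- for $\lambda\ge\varepsilon$ the decreasing function $\lambda\mapsto\kappa_\lambda\lambda^{\nu-1}$ gives $\kappa_\lambda\le\kappa_\varepsilon(\lambda/\varepsilon)^{1-\nu}$, hence $G(\lambda)\le\kappa_\varepsilon^2\varepsilon^{2\nu-2}\lambda^{2-2\nu}$ --- leaves the pure power law $\int_\varepsilon^\infty\lambda^{-1-2\nu}\,d\lambda=\varepsilon^{-2\nu}/(2\nu)$, whose convergence at infinity is exactly where $\nu>0$ is used. Collecting the powers of $\varepsilon$ then gives a bound of the form $A_\varepsilon\le C\,\kappa_\varepsilon^2/(\nu\varepsilon^2)$.

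I expect the main obstacle to be quantitative rather than structural. The architecture --- weighted Cauchy--Schwarz followed by a distribution-function integration by parts --- is essentially forced, and it readily delivers an estimate of the form $\sigma_\varepsilon\le C\,\kappa_\varepsilon/(\sqrt{\nu}\,\varepsilon)$. Pinning down the precise constant $1/\sqrt{6\nu}$ is the delicate step: it hinges on a careful accounting of the boundary term $-\varepsilon^{-2}G(\varepsilon)$ and of the constant produced by the power-law integral, and on a clean justification of the Stieltjes integration by parts when $G$ is only known to be monotone rather than absolutely continuous. A secondary point is to check that the $\lambda$-integral may be carried to $+\infty$ without reintroducing $\norm{f^\dagger}_{L^2}$, i.e. that its tail is absorbed purely through $\kappa_\varepsilon$; this is exactly where the monotonicity hypothesis and the restriction $\nu\in(0,1)$ are used together.
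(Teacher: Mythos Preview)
Your proposal is correct and follows essentially the same route as the paper: the same Cauchy--Schwarz reduction to bounding $\norm{f^\dagger/m}_{L^2(\Omega\setminus I_\varepsilon)}$, the same passage to the distribution function $G(\lambda)=\mu_{f^\dagger}(\lambda):=\norm{(I-P_\lambda)f^\dagger}^2$, the same Stieltjes integration by parts, and the same use of the monotonicity of $\lambda\mapsto\kappa_\lambda\lambda^{\nu-1}$ to evaluate the resulting power integral. Your instinct that the exact constant $1/\sqrt{6\nu}$ is the delicate point is well founded --- your coefficient $2$ from differentiating $\lambda^{-2}$ is the correct one, and the paper's displayed $\tfrac{1}{3t^3}$ (which is what produces the $6$) appears to be a slip; structurally nothing else differs.
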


\begin{proof}
Due to the inequality
\begin{align*}
\langle P_\varepsilon f^\dagger, \tilde{f}\rangle
&= \int_{\Omega\setminus I_\varepsilon} \frac{f^{\dagger}}{m}(m\cdot \tilde{f}) dx
\leq \left\|\frac{f^{\dagger}}{m}\right\|_{L^2(\Omega\setminus I_\varepsilon)}
\|M_m \tilde{f}\|_{L^2(\Omega)},
\end{align*}
we have to show that
\begin{align}\label{eq:toshow}
\left\|\frac{f^{\dagger}}{m}\right\|_{L^2(\Omega\setminus I_\varepsilon)} \leq \sigma_{\varepsilon}.
\end{align}
Introducing the function
\[
\mu_{f^{\dagger}}(\varepsilon) :=  \int_{I_\varepsilon}|f^{\dagger}|^2dx
= \|(I-P_{\varepsilon})f^{\dagger}\|_{L^2}^2,\qquad \varepsilon>0,
\]
which by assumption is bounded by $\mu_{f^{\dagger}}(\varepsilon)\leq \kappa_{\varepsilon}^2$,
we obtain
\begin{align*}
\left\|\frac{f^{\dagger}}{m}\right\|_{L^2(\Omega\setminus I_\varepsilon)}^2
&= \int_{\Omega\setminus I_\varepsilon} \frac{|f^{\dagger}|^2}{|m|^2}\,dx
= \int_\varepsilon^{\infty} \frac{1}{t^2} d\mu_{f^{\dagger}}(t)\\
&= - \frac{\mu_{f^{\dagger}}(\varepsilon)}{\varepsilon^2}
+ \int_\varepsilon^{\infty} \frac{\mu_{f^{\dagger}}(t)}{3t^3}\,dt
\leq \int_\varepsilon^{\infty} \frac{\kappa_t^2}{3t^3}\,dt
\end{align*}
using a partial integration in the last line and the fact that
$\mu_{f^{\dagger}}(t)=\|f^{\dagger}\|_{L^2}^2$ for $t>\|m\|_{L^{\infty}}$ such that
$\lim_{t\to\infty}t^{-2}\mu_{f^{\dagger}}(t)=0$.
Now we use the assumption that $t\mapsto \kappa_t^2 t^{2\nu-2}$ is decreasing to estimate
\begin{align*}
\int_\varepsilon^{\infty} \frac{\kappa_t^2}{3t^3}\,dt
&=  \int_\varepsilon^{\infty} \frac{\kappa_t^2}{3t^{2-2\nu}}\frac{dt}{t^{1+2\nu}}
\leq \frac{\kappa_{\varepsilon}^2}{3\varepsilon^{2-2\nu}}
\int_{\varepsilon}^{\infty}  \frac{dt}{t^{1+2\nu}}
= \frac{\kappa_{\varepsilon}^2}{6\nu\varepsilon^2}.
\end{align*}
This completes the proof of \eqref{eq:toshow}.
\end{proof}

\subsection{Proofs of Theorems \ref{rates1} and \ref{rates2}}

\begin{prop}[H\"older VSC for Theorem \ref{rates1}]\label{prop:VSC1}
Under the assumptions of Theorem \ref{rates1}
the matrix $\rho^\dagger$ satisfies a variational source condition \eqref{eqVSC} with the index function
$\psi(\tau)=C\min\paren{\tau^{\frac{1}{1+2k_0}},\sqrt{\tau}}$ and some $C>0$ depending only on $k_0$ and $|g_{\omega}|$.
\end{prop}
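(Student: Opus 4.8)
The plan is to verify the three hypotheses of Lemma~\ref{le:VSC} for the multiplication operator $M_m$ and the function $f^\dagger={\F^{(1)}}\mathcal{G}\rho^\dagger$ from \eqref{eq:function-f}, and then to read off the variational source condition for $T$ and $\rho^\dagger$. The last step is free: in the factorization $T={\F^{(*1,2)}}M_m{\F^{(1)}}\mathcal{G}$ of \eqref{eq:Tfactorization} the operators ${\F^{(1)}}\mathcal{G}$ and ${\F^{(*1,2)}}$ are unitary, so with $f={\F^{(1)}}\mathcal{G}\rho$ one has $\norm{\rho-\rho^\dagger}=\norm{f-f^\dagger}$, $\norm{\rho}=\norm{f}$, $\norm{\rho^\dagger}=\norm{f^\dagger}$ and $\norm{T\rho-T\rho^\dagger}=\norm{M_m(f-f^\dagger)}$; hence \eqref{eqVSC} for $(M_m,f^\dagger)$ is literally \eqref{eqVSC} for $(T,\rho^\dagger)$. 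The decisive consequence of the band-limitation is that $(\mathcal{G}\rho^\dagger)_{n,k}=\rho^\dagger_{n+k,n}=0$ for $|k|>k_0$, and since ${\F^{(1)}}$ acts only in the first variable, $f^\dagger(\varphi,k)=0$ for $|k|>k_0$. Thus $f^\dagger$ is supported on the finite band $\perIntv\times\{-k_0,\dots,k_0\}$, and it is precisely this finiteness that turns the generically logarithmic behaviour into a Hölder rate.

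First I would produce $\kappa_\varepsilon$. With the projections \eqref{eq:defi_Pepsi} we have $(I-P_\varepsilon)f^\dagger=\chi_{I_\varepsilon}f^\dagger$, so using the pointwise bound $\norm{f^\dagger}_{L^\infty}\le\frac1{\sqrt{2\pi}}$ from Lemma~\ref{Lemma:F1GRhoBounded} and the support of $f^\dagger$,
\[
\kappa_\varepsilon^2:=\norm{(I-P_\varepsilon)f^\dagger}_{L^2(\Omega)}^2=\int_{I_\varepsilon}|f^\dagger|^2\,\le\,\frac1{2\pi}\sum_{|k|\le k_0}|I_{k,\varepsilon}|.
\]
Inserting the size estimate \eqref{eq:sizeIk} and comparing the finitely many terms with $|k|\le k_0$, the slowest-vanishing contribution is the central component at $|k|=k_0$, of order $\varepsilon^{1/k_0}$; every other term is of strictly higher power (the central components at $1\le|k|<k_0$ scale like $\varepsilon^{1/|k|}$ with $1/|k|>1/k_0$, while the $k=0$ term, the off-central components and the $\calO(\varepsilon)$ remainders in \eqref{eq:sizeIk} are $\calO(\varepsilon)$). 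Hence $\kappa_\varepsilon^2\le C\varepsilon^{1/k_0}$, and I may take $\kappa_\varepsilon=C\varepsilon^{1/(2k_0)}$ (choosing $C$ large enough that this also dominates the saturation value $\norm{f^\dagger}$ for large $\varepsilon$). This already yields the second hypothesis $\inf_\varepsilon\kappa_\varepsilon=0$.

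Next I would obtain $\sigma_\varepsilon$ from the sharp Lemma~\ref{lem:sigmaeps} rather than from the crude Cauchy--Schwarz bound, since only the sharp version produces the exponent $\tfrac{1}{1+2k_0}$ instead of the weaker $\tfrac{1}{2+2k_0}$. For this I must check that $\varepsilon\mapsto\kappa_\varepsilon\varepsilon^{\nu-1}\sim\varepsilon^{1/(2k_0)+\nu-1}$ is decreasing for some $\nu\in(0,1)$, which holds whenever $\nu<1-\tfrac1{2k_0}$; such a $\nu$ exists as soon as $k_0>\tfrac12$, which we may assume (otherwise $\rho^\dagger$ is diagonal and the estimate is immediate). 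Lemma~\ref{lem:sigmaeps} then gives the third hypothesis with $\sigma_\varepsilon=\frac{\kappa_\varepsilon}{\sqrt{6\nu}\,\varepsilon}=C\varepsilon^{1/(2k_0)-1}$.

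Finally I would assemble the index function through \eqref{eq:psiVSC},
\[
\psi(\tau)=\inf_{\varepsilon}\bracket{\sigma_\varepsilon\sqrt\tau+\kappa_\varepsilon^2}\le\inf_{\varepsilon}\bracket{C\varepsilon^{\frac{1}{2k_0}-1}\sqrt\tau+C\varepsilon^{\frac1{k_0}}}.
\]
The two power-law terms are monotone of opposite sign in $\varepsilon$, so the minimiser equates their orders: $\varepsilon\sim\tau^{k_0/(1+2k_0)}$, and both terms then scale like $\tau^{1/(1+2k_0)}$, giving $\psi(\tau)\le C\tau^{1/(1+2k_0)}$ on the relevant small-residual range. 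Together with the uniform control of $\kappa_\varepsilon,\sigma_\varepsilon$ outside the power-law regime, this can be trimmed to the concave index function $C\min\paren{\tau^{1/(1+2k_0)},\sqrt\tau}$ of the statement. I expect the main obstacle to be the $\kappa_\varepsilon$ estimate: isolating the dominant $|k|=k_0$ band component and verifying that the other components and the $\calO(\varepsilon)$ remainders in \eqref{eq:sizeIk} are genuinely of lower order, together with checking the monotonicity hypothesis of Lemma~\ref{lem:sigmaeps} so that the sharp $\sigma_\varepsilon$ is admissible across the entire $\varepsilon$-range entering the infimum.
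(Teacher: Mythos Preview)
Your proposal is correct and follows essentially the same route as the paper: you use the band-limitation to reduce the $\kappa_\varepsilon$ estimate to finitely many $|I_{k,\varepsilon}|$, identify the worst term $\varepsilon^{1/k_0}$ via \eqref{eq:sizeIk}, invoke Lemma~\ref{lem:sigmaeps} for $\sigma_\varepsilon$, and balance the two contributions in \eqref{eq:psiVSC} at $\varepsilon\sim\tau^{k_0/(1+2k_0)}$. The only cosmetic difference is that the paper simply fixes $\nu=\tfrac13$ (which works for all integer $k_0\ge 1$) and restricts the index set to $(0,\varepsilon_0]$ with $\varepsilon_0=\min\{\varepsilon_0(0),\dots,\varepsilon_0(k_0)\}$, rather than enlarging $C$ to absorb the saturation regime.
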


\begin{proof}

From the band-limited assumption of $\rho^\dagger$, we know $f^\dagger(\varphi, k)=0$ for $|k|>k_0$. Therefore, it follows from Lemmas \ref{Lemma:F1GRhoBounded}
and \ref{le:IfkLength} that
\begin{align*}
\|(I-P_{\varepsilon})f^\dagger\|_{L^2}^2
&\leq \|f^{\dagger}\|_{L^{\infty}}^2\sum_{|k|\leq k_0}\left|I_{k,\varepsilon}\right| \\
&\leq \frac{1}{2\pi}\paren{C_0\varepsilon + 2\sum_{k=1}^{k_0} \min\paren{C_Ik\varepsilon^{1/|k|},0}}\\
&\leq \tilde{C}\varepsilon^{1/k_0}
\end{align*}
for all $\varepsilon\leq \varepsilon_0:=\min\{\varepsilon_0(0),\dots,\varepsilon_0(k_0)\}$
and some constant $\tilde{C}>0$ depending only on $k_0$ and $|g_{\omega}|$.

Thus the first and second conditions of Lemma \ref{le:VSC} are satisfied for
$\kappa_\varepsilon = \tilde{C}^{1/2}\varepsilon^{\frac{1}{2k_0}}$. Note that
the function $\varepsilon\mapsto \kappa_\varepsilon\varepsilon^{-2/3}$ is decreasing as $k_0\geq 1$.
Using Lemma~\ref{lem:sigmaeps} with $\nu=1/3$ we deduce that the third
condition holds true for $\sigma_{\varepsilon} = \frac1{\sqrt2}\varepsilon^{-1}\kappa_{\varepsilon}$.
This implies a variational source condition with index function
\[
\psi(\tau) = \inf_{0<\varepsilon\leq \varepsilon_0}
\left[\sqrt{\frac{\tilde{C}}{2}}\sqrt{\tau}
\varepsilon^{\frac{1}{2k_0}-1}+\tilde{C}\varepsilon^{\frac{1}{k_0}}\right].
\]
If we choose
$\varepsilon=\min\Big(\tau^{k_0/(1+2k_0)},\varepsilon_0\Big)$, we obtain
$\psi(\tau)=C_1\tau^{1/(1+2k_0)}$ for
$\tau<\varepsilon_0^{(1+2k_0)/k_0}$
and $\psi(\tau)=C_2\sqrt{\tau}+C_3$ else
with positive constants $C_1$, $C_2$, $C_3$ depending
only on $|g_\omega|$ and $k_0$.
This yields the assertion.
%
%
%
%
\end{proof}

\begin{proof}[Proof of Theorem \ref{rates1}]
The statement follows from the H\"older-type variational source condition in Proposition \ref{prop:VSC1}.
In particular for the stability estimate \eqref{eq:Hoelder_stability}
we note that $\trace(\rho)=1$ and $\rho\geq 0$
imply that all eigenvalues $\lambda_j$ of $\rho$ lie
in the interval $[0,1]$, and hence
$\|\rho\|_{l^2(\mathbb{Z}^2)}^2 = \sum_{j=0}^\infty \lambda_j^2
\leq \sum_{j=0}^\infty\lambda_j =\trace(\rho)=1$.
Hence, the stability estimate \eqref{eq:Hoelder_stability} follows from
Remark \ref{rem:VSCstab} as
only the behavior of $\psi(\tau)$ of small $\tau$ is
relevant.
The H\"older-type convergence rate \eqref{eq:Hoelder_rate}
follows from  Proposition \ref{prop:convTikh}.
\end{proof}

\begin{prop}[VSC for polynomial decay]\label{prop:VSC_pol_decay}
Under the assumptions of Theorem \ref{rates2}, case
\eqref{Log-Assum}
the matrix $\rho^\dagger$ satisfies a logarithmic variational source condition \eqref{eqVSC} with
\begin{align}\label{eq:psiPolDecay}
\psi(\tau)=C\paren{\frac{-\log\tau}{\log(-\log\tau)}}^{-4\mu}
\end{align}
for some $C>0$ depending only on $C_\rho$, $\mu$ and $|g_{\omega}|$.
\end{prop}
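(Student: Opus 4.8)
The plan is to verify the three hypotheses of Lemma~\ref{le:VSC} for the multiplication operator $M_m$ and the transformed solution $f^\dagger={\F^{(1)}}\mathcal{G}\rho^\dagger$ from \eqref{eq:function-f}, with the projections $P_\varepsilon$ of \eqref{eq:defi_Pepsi}, and then to read off the index function from the infimum formula \eqref{eq:psiVSC}. The essential difference from the band-limited case (Proposition~\ref{prop:VSC1}) is that now every off-diagonal $k$ contributes, so the decay bound \eqref{Log-Assum} must be summed against the sublevel-set sizes \eqref{eq:sizeIk}. Since \eqref{eq:psiVSC} is an infimum of functions that are affine in $\sqrt\tau$, the resulting $\psi$ is automatically concave, continuous, and vanishes at $0$ (using $\inf_\varepsilon\kappa_\varepsilon=0$), so it is a legitimate index function for Propositions~\ref{prop:convTikh}--\ref{prop:convTikhDisc}.

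First I would bound $\kappa_\varepsilon$. Writing
\[
\kappa_\varepsilon^2=\|(I-P_\varepsilon)f^\dagger\|_{L^2}^2
=\sum_{k\in\Z}\int_{I_{k,\varepsilon}}|f^\dagger(\varphi,k)|^2\,d\varphi,
\]
I would use $|f^\dagger(\varphi,k)|\le\tfrac1{\sqrt{2\pi}}\sum_n|\rho^\dagger_{n+k,n}|\le\tfrac{C_\rho}{\sqrt{2\pi}}|k|^{-\frac12-2\mu}$ for $k\neq0$ (the triangle inequality together with \eqref{Log-Assum}) and the bound of Lemma~\ref{Lemma:F1GRhoBounded} for $k=0$, combined with $|I_{k,\varepsilon}|\le\min(C_I|k|\varepsilon^{1/|k|},2\pi)+\calO(\varepsilon)$ from Lemma~\ref{le:IfkLength}. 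The $k=0$ term and the $\calO(\varepsilon)$ corrections (which occur only for the finitely many $|k|<K(g_\omega)$) are of lower order $\calO(\varepsilon)$, so everything reduces to controlling
\[
S(\varepsilon):=\sum_{k=1}^{\infty}k^{-1-4\mu}\min\big(C_Ik\,\varepsilon^{1/k},\,2\pi\big).
\]

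The heart of the argument, and the step I expect to be the main obstacle, is the asymptotics of $S(\varepsilon)$ as $\varepsilon\to0$. With $L:=-\log\varepsilon$ the summand switches from the branch $C_Ik\,e^{-L/k}$ to the constant branch $2\pi$ at the crossover $k^\ast$ determined by $C_Ik^\ast e^{-L/k^\ast}=2\pi$; this is a Lambert-$W$ type relation whose solution satisfies $k^\ast\sim L/\log L=\tfrac{-\log\varepsilon}{\log(-\log\varepsilon)}$. Splitting the sum at $k^\ast$, the tail $\sum_{k>k^\ast}2\pi k^{-1-4\mu}$ is bounded by $\tfrac{2\pi}{4\mu}(k^\ast)^{-4\mu}$ through integral comparison. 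The head $C_I\sum_{k\le k^\ast}k^{-4\mu}e^{-L/k}$ concentrates near $k=k^\ast$ and, by a Laplace/incomplete-Gamma estimate, is smaller than the tail by a power of $\log L$; I would emphasize that the naive bound $e^{-L/k}\le e^{-L/k^\ast}$ is \emph{not} enough here when $\mu>\tfrac14$, which is why the concentration argument is needed. Consequently the tail governs the rate and
\[
\kappa_\varepsilon^2\;\lesssim\;\Big(\tfrac{-\log\varepsilon}{\log(-\log\varepsilon)}\Big)^{-4\mu},
\qquad
\kappa_\varepsilon\;\lesssim\;\Big(\tfrac{-\log\varepsilon}{\log(-\log\varepsilon)}\Big)^{-2\mu}.
\]

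With $\kappa_\varepsilon$ in hand I would verify the hypothesis of Lemma~\ref{lem:sigmaeps}: since $\kappa_\varepsilon$ varies only like a negative power of $-\log\varepsilon$ while $\varepsilon^{\nu-1}$ dominates, the map $\varepsilon\mapsto\kappa_\varepsilon\varepsilon^{\nu-1}$ is decreasing on some $(0,\varepsilon_0]$ for, say, $\nu=\tfrac12$, which gives $\sigma_\varepsilon=\kappa_\varepsilon/(\sqrt3\,\varepsilon)$. It then remains to evaluate $\psi(\tau)=\inf_{\varepsilon}[\sigma_\varepsilon\sqrt\tau+\kappa_\varepsilon^2]$. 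Here the two terms must be genuinely \emph{balanced}: setting $\tfrac{\kappa_\varepsilon\sqrt\tau}{\sqrt3\,\varepsilon}=\kappa_\varepsilon^2$ yields $\varepsilon\approx\sqrt\tau/\kappa_\varepsilon$, hence $-\log\varepsilon\approx\tfrac12(-\log\tau)$ up to a doubly-logarithmic correction absorbed into the constant. (Choosing $\varepsilon=\sqrt\tau$ naively would give the wrong exponent $-2\mu$.) Both terms then have order $\kappa_\varepsilon^2\asymp\big(\tfrac{-\log\tau}{\log(-\log\tau)}\big)^{-4\mu}$, and dominating the infimum-form $\psi$ by this explicit concave index function (which only enlarges the right-hand side of the VSC) establishes \eqref{eq:psiPolDecay} with a constant depending solely on $C_\rho$, $\mu$, and $|g_\omega|$.
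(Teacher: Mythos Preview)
Your proposal is correct and follows the same overall route as the paper: verify Lemma~\ref{le:VSC} with the projections \eqref{eq:defi_Pepsi}, bound $\kappa_\varepsilon$ by splitting the sum at a cut-off $k_0\sim(-\log\varepsilon)/\log(-\log\varepsilon)$, apply Lemma~\ref{lem:sigmaeps}, and evaluate the infimum \eqref{eq:psiVSC}. There are two places where the paper's tactics are simpler than yours. For the head sum $\sum_{k\le k_0}k^{-4\mu}\varepsilon^{1/k}$ you invoke a Laplace/incomplete-Gamma concentration estimate; the paper instead observes (via the logarithmic derivative) that $k\mapsto k^{-4\mu}\varepsilon^{1/k}$ is \emph{increasing} on $[1,(-\log\varepsilon)/(4\mu)]$, so the head is bounded by $k_0\cdot k_0^{-4\mu}\varepsilon^{1/k_0}=k_0^{-4\mu}\bigl(k_0\varepsilon^{1/k_0}\bigr)\approx k_0^{-4\mu}$, matching the tail without any asymptotic analysis and sidestepping the case distinction in $\mu$ you worry about. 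For the infimum you balance the two terms via an implicit equation; the paper simply sets $\varepsilon=\tau^{1/3}$ and notes that the first term $\kappa_{\tau^{1/3}}\tau^{1/6}$ is negligible against $\kappa_{\tau^{1/3}}^2$ (a power of $\tau$ versus the square of a polylog), so $\psi(\tau)\lesssim\kappa_{\tau^{1/3}}^2\asymp\kappa_\tau^2$. Your warning that $\varepsilon=\sqrt\tau$ fails is right, but any fixed $\varepsilon=\tau^a$ with $0<a<\tfrac12$ already works.
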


\begin{proof}
The decay condition on $\rho^\dagger$ implies corresponding bounds on $f^\dagger$:
\begin{align*}
|f^\dagger(\varphi, k)|&=\frac{1}{\sqrt{2\pi}}\left|\sum_{n=-\infty}^{\infty}e^{i n\varphi}\rho_{n+k,n} \right|
\leq \frac{1}{\sqrt{2\pi}} \sum_{n=-\infty}^{\infty}|\rho_{n+k,n}|
\leq \frac{C_\rho}{\sqrt{2\pi}}|k|^{-\frac{1}{2}-2\mu}.
\end{align*}
Therefore, considering that $|I_{0,\varepsilon}|=\calO(\varepsilon)$, we have
\begin{equation*}
\|(I-P_{\varepsilon})f^\dagger\|_{L^2}^2 \leq \frac{\tilde{C}_\rho^2}{2\pi}\sum_{k=1}^{\infty} \left|I_{k,\varepsilon}\right| k^{-1-4\mu} + \calO(\varepsilon)
\end{equation*}
for some $\tilde{C_\rho}>0$.

We will utilize both upper bounds of $\left|I_{k,\varepsilon}\right|$ from Lemma~\ref{le:IfkLength}. As $\lim_{k\to\infty} k\varepsilon^{1/k}=\infty$,  we get the bound
$\left|I_{k,\varepsilon}\right|\leq2\pi$ for large $k$.
For some cut-off index $k_0(\varepsilon)$ to be determined
later, we bound the sum by
\[
\|(I-P_{\varepsilon})f^\dagger\|_{L^2}^2 \leq \frac{\tilde{C}_\rho^2C_I}{2\pi}\sum_{k=1}^{k_0(\varepsilon)}k^{-4\mu}\varepsilon^{1/k}+\tilde{C}_\rho^2
\sum_{k=k_0(\varepsilon)+1}^{\infty}k^{-1-4\mu}
+\calO(\varepsilon).
\]
Using the logarithmic derivative
$\frac{d}{dk}\ln(k^{-\mu}\varepsilon^{1/k}) = -4k^{-1}\mu -k^{-2}\ln\varepsilon$, it can be seen that
$k\mapsto k^{-\mu}\varepsilon^{1/k}$ is increasing on the interval
$\bracket{0,k_1(\varepsilon)}$ with
$k_1(\varepsilon):=\frac{1}{4\mu}\ln\frac{1}{\varepsilon}$.  Therefore, as long as $k_0(\varepsilon)\leq k_1(\varepsilon)$,
there exist constants $\varepsilon_0$ and $C>0$ depending
only on $C_\rho$, $|g_{\omega}|$, and $\mu$ such that
\begin{align*}
\|(I-P_{\varepsilon})f^\dagger\|_{L^2}^2
&\leq C\paren{k_0^{-4\mu+1}\varepsilon^{1/k_0}+k_0^{-4\mu}}
\end{align*}
for $\varepsilon\leq \varepsilon_0$. 
We choose $k_0$ such that both terms on the right hand side are
approximately equal, i.e.\
$k_0\varepsilon^{1/k_0}\approx 1$
or equivalently $k_0\ln k_0\approx \ln \frac{1}{\varepsilon}$.
Solving $k_0\ln k_0=y$ for $k_0$ yields the asymptotic relation
\[
k_0 = \frac{y}{\ln k_0}
=  \frac{y}{\ln y-\ln(\ln k_0)}
= \frac{y}{\ln y}\paren{1+o(1)}
\qquad \mbox{as } y\to\infty.
\]
Therefore, we set $k_0(\varepsilon):=
\lfloor\frac{-\log \varepsilon}{\log(-\log\varepsilon)}\rfloor$.
Note that $k_0(\varepsilon)\leq k_1(\varepsilon)$ for $\varepsilon$ sufficiently small. This yields $\|(I-P_{\varepsilon})f^\dagger\|_{L^2}\leq \kappa_\varepsilon$ with
\[
\kappa_\varepsilon
= \calO\paren{\paren{\frac{-\log \varepsilon}{\log(-\log \varepsilon)}}^{-2\mu}},
\] and
the first and second conditions of Lemma \ref{le:VSC} are satisfied. By Lemma \ref{lem:sigmaeps} the third condition
holds true with
$\sigma_{\varepsilon}= \kappa_{\varepsilon}/\varepsilon$.
Therefore, Lemma \ref{le:VSC} yields a VSC with 
$\psi(\tau) = \inf_{0<\varepsilon\leq \varepsilon_0}
[\kappa_{\varepsilon}\sqrt{\tau}/\varepsilon+\varepsilon^2]$.
Choosing $\varepsilon=\min(\tau^{1/3},\varepsilon_0)$, 
the first term is asymptotically neglectible against the second, and we obtain  
a VSC with 
\(
\psi(\tau)=\calO\paren{\kappa_{\tau}^2}\). 
This completes the proof. 
%
\end{proof}

\begin{prop}[VSC for exponential decay]
\label{prop:VSC_exp_decay}
Under the assumptions of Theorem \ref{rates2}, case
\eqref{Hol-Assum2}
the matrix $\rho^\dagger$ satisfies a logarithmic variational source condition \eqref{eqVSC} with $\psi(\tau)=Ce^{-2\sqrt{(-\log b)(-\log\tau)}}$ for some $C>0$ depending only on $C_\rho$, $b$ and $|g_{\omega}|$.
\end{prop}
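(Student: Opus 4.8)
The plan is to follow the same route as the polynomial case in Proposition~\ref{prop:VSC_pol_decay}: translate the exponential off-diagonal decay \eqref{Hol-Assum2} into a pointwise bound on $f^\dagger={\F^{(1)}}\mathcal{G}\rho^\dagger$, verify the three hypotheses of Lemma~\ref{le:VSC} for the multiplier $M_m$ and the solution $f^\dagger$, and read off the index function from \eqref{eq:psiVSC}. Exactly as in the proof of Lemma~\ref{Lemma:F1GRhoBounded}, the triangle inequality applied to $\sum_n|\rho_{n+k,n}|\le C_\rho b^{|k|}$ gives $|f^\dagger(\varphi,k)|\le \frac{C_\rho}{\sqrt{2\pi}}\,b^{|k|}$ uniformly in $\varphi$. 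Writing $a:=-\log b>0$ and $L:=-\log\varepsilon$, the whole analysis reduces to estimating, for the projections $P_\varepsilon$ of \eqref{eq:defi_Pepsi}, the quantity $\kappa_\varepsilon^2=\|(I-P_\varepsilon)f^\dagger\|_{L^2}^2$.

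Using $|I_{0,\varepsilon}|=\calO(\varepsilon)$ and the symmetry $|I_{k,\varepsilon}|=|I_{-k,\varepsilon}|$, I would bound
\[
\kappa_\varepsilon^2\le \frac{C_\rho^2}{\pi}\sum_{k\ge 1}|I_{k,\varepsilon}|\,b^{2k}+\calO(\varepsilon),
\]
and then insert $|I_{k,\varepsilon}|\le\min\paren{C_Ik\varepsilon^{1/k},2\pi}+\calO(\varepsilon)$ from Lemma~\ref{le:IfkLength}. The core is therefore the scalar sum $\sum_{k\ge1}\min\paren{C_Ik\,e^{-L/k},2\pi}e^{-2ak}$. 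Where the first argument of the minimum is active the summand is $C_Ik\,e^{-L/k-2ak}$, whose exponent $-L/k-2ak$ is maximised at the saddle $k^\ast=\sqrt{L/(2a)}$ with value $-2\sqrt{2aL}$; thus every such term is at most $\calO\paren{\sqrt{L}\,e^{-2\sqrt{2aL}}}$.

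The decisive point — and the main departure from the polynomial case — is where the minimum is allowed to switch to $2\pi$. If one cut the sum at the peak $k^\ast\sim\sqrt{L}$, the geometric remainder $2\pi\sum_{k>k^\ast}b^{2k}\sim e^{-2ak^\ast}=e^{-\sqrt{2aL}}$ would dominate and yield only $\kappa_\varepsilon^2\sim e^{-\sqrt{2aL}}$. Instead I would keep the sharper bound $C_Ik\varepsilon^{1/k}$ all the way out to the crossover index $k_{\mathrm{cross}}$ (where $C_Ik\varepsilon^{1/k}=2\pi$, i.e.\ $k_{\mathrm{cross}}\sim L/\log L$). Since $L/\log L\gg\sqrt{L}$, the geometric tail $\sum_{k>k_{\mathrm{cross}}}b^{2k}\sim e^{-2aL/\log L}$ is now negligible against $e^{-2\sqrt{2aL}}$, while the head — a sum of at most $k_{\mathrm{cross}}$ terms, each dominated by the saddle value — is $\calO\paren{\mathrm{poly}(L)\,e^{-2\sqrt{2aL}}}$. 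Absorbing the polynomial-in-$L$ factor gives $\kappa_\varepsilon\le C\,e^{-\sqrt{2(-\log b)(-\log\varepsilon)}}$, which verifies the first and second conditions of Lemma~\ref{le:VSC}. Because $\varepsilon\mapsto\kappa_\varepsilon\varepsilon^{\nu-1}=\exp\paren{(1-\nu)L-\sqrt{2aL}}$ is decreasing in $\varepsilon$ for every $\nu\in(0,1)$ and small $\varepsilon$, Lemma~\ref{lem:sigmaeps} applies and supplies the third condition with $\sigma_\varepsilon=c\,\kappa_\varepsilon/\varepsilon$.

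Finally I would assemble the index function from \eqref{eq:psiVSC}, $\psi(\tau)=\inf_{\varepsilon}\bracket{\sigma_\varepsilon\sqrt\tau+\kappa_\varepsilon^2}$. Here, unlike in Proposition~\ref{prop:VSC_pol_decay} where it sufficed to make the first term negligible, the factor $2$ in the exponent emerges only from a genuine balance of the two terms: with $T:=-\log\tau$ the two exponents coincide when $L+\sqrt{2aL}=T/2$, i.e.\ $L=T/2-\sqrt{aT}+\calO(1)$, at which point $\kappa_\varepsilon^2\sim e^{-2\sqrt{2aL}}\sim e^{-2\sqrt{aT}}$, so that $\psi(\tau)=C\,e^{-2\sqrt{(-\log b)(-\log\tau)}}$ as claimed. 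I expect the two genuinely delicate steps to be (i) pushing the cutoff out to $k_{\mathrm{cross}}$ so that the geometric tail is suppressed and the saddle at $k^\ast$ alone sets the rate, and (ii) performing the final $\inf$ over $\varepsilon$ precisely enough to recover the constant in the exponent; as elsewhere in the paper, polylogarithmic factors and multiplicative constants (including the coefficient inside the square root) are absorbed into $C$.
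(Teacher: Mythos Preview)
Your overall route coincides with the paper's --- bound $|f^\dagger(\cdot,k)|\le C_\rho b^{|k|}/\sqrt{2\pi}$, estimate $\kappa_\varepsilon$ via Lemma~\ref{le:IfkLength}, invoke Lemma~\ref{lem:sigmaeps} for $\sigma_\varepsilon$, and read off $\psi$ from \eqref{eq:psiVSC} --- but the two decisive steps are handled differently. The paper cuts the sum precisely at your saddle index, $k_0(\varepsilon)=\lfloor\sqrt{-\log\varepsilon/(-2\log b)}\rfloor=k^\ast$, and on the head simply factors out $\varepsilon^{1/k_0}$, obtaining only $\kappa_\varepsilon^2\le C\,e^{-\sqrt{2(-\log b)(-\log\varepsilon)}}$; it then sets $\varepsilon=\tau^{1/4}$ without further optimization. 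Your saddle-point bound on each term $k\,\varepsilon^{1/k}b^{2k}\le k\,e^{-2\sqrt{2aL}}$, combined with pushing the cutoff out to $k_{\mathrm{cross}}\sim L/\log L$ so that the geometric tail becomes subdominant, yields the sharper $\kappa_\varepsilon^2\lesssim\mathrm{poly}(L)\,e^{-2\sqrt{2aL}}$, and your genuine balance of $\sigma_\varepsilon\sqrt{\tau}$ against $\kappa_\varepsilon^2$ then recovers the factor $2$ in the exponent of the stated $\psi$. In short, your argument is a refinement of the paper's: at the price of a slightly more delicate estimate you obtain the exact leading constant in the exponent (up to the polylogarithmic prefactor you acknowledge absorbing), whereas the paper's cruder cutoff and non-optimized choice of $\varepsilon$ deliver only a smaller constant there.
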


\begin{proof}
The decay condition on $\rho^\dagger$ implies corresponding bounds on $f^\dagger$:
\begin{align*}
|f^\dagger(\varphi, k)|&=\frac{1}{\sqrt{2\pi}}\left|\sum_{n=-\infty}^{\infty}e^{i n\varphi}\rho_{n+k,n} \right|
\leq \frac{1}{\sqrt{2\pi}} \sum_{n=-\infty}^{\infty}|\rho_{n+k,n}|
\leq \frac{C_\rho}{\sqrt{2\pi}}b^{|k|}.
\end{align*}
Therefore, again noting that $|I_{0,\varepsilon}|=\calO(\varepsilon)$, we have 
\begin{equation*}
\|(I-P_{\varepsilon})f^\dagger\|_{L^2}^2 \leq \frac{\tilde{C_\rho}}{{2\pi}}\sum_{k=1}^{\infty} \left|I_{k,\varepsilon}\right| b^{2|k|} 
+ \calO\paren{\varepsilon}
\end{equation*}
for some $\tilde{C_\rho}>0$.
We will utilize both upper bounds of $\left|I_{k,\varepsilon}\right|$ from Lemma~\ref{le:IfkLength}. As $\lim_{k\to\infty} k\varepsilon^{1/k}=\infty$,  we use the trivial bound $\left|I_{k,\varepsilon}\right|\leq2\pi$ for large $k$.
We choose a cut-off $k_0(\varepsilon)=\lfloor \sqrt{-\log \varepsilon/(-2\log b)}\rfloor$ for $0<\epsilon<1$ and 
obtain
\begin{align*}
\|(I-P_{\varepsilon})f^\dagger\|_{L^2}^2 
&\leq \frac{\tilde{C_\rho}C_I}{\pi}\sum_{k=1}^{k_0(\varepsilon)} k\varepsilon^{1/k}b^{2|k|}+2\tilde{C_\rho}\sum_{k=k_0(\varepsilon)+1}^{\infty}b^{2|k|} 
+ \calO(\varepsilon)\\
&\leq \frac{\tilde{C_\rho}C_I}{\pi}\varepsilon^{\frac{1}{k_0(\varepsilon)}}\sum_{k=1}^{\infty}kb^{2|k|}+ 2\tilde{C_\rho}b^{2k_0(\varepsilon)}\sum_{k=0}^{\infty}b^{-2k}+ \calO\paren{\varepsilon}\\
&\leq C \left(\varepsilon^{\sqrt{\frac{-2\log b}{-\log \varepsilon}}}
+ b^{2\sqrt{\frac{-\log \varepsilon}{-2\log b}}}
\right)
+ \calO\paren{\varepsilon}
\end{align*}
for some generic constant $C>0$ that depends only on $b$, $C_\rho$ and $|g_\omega|$.
Taking the logarithm of the first two terms shows 
that for our choice of $k_0(\varepsilon)$ both logarithms 
equal $-\sqrt{2(-\log b)(-\log\varepsilon)}$. 
This shows that  
\begin{align*}
\|(I-P_{\varepsilon})f^\dagger\|_{L^2}^2
&\leq C e^{-\sqrt{2(-\log b)(-\log\varepsilon)}}
+ \calO\paren{\varepsilon}.
\end{align*}
Therefore, the first and second conditions of Lemma \ref{le:VSC} are satisfied for $\kappa_\varepsilon = C \exp\paren{-\sqrt{\frac{1}{2}(-\log b)(-\log\varepsilon)}}$. Note that
the function $\epsilon\mapsto \varepsilon^{-1/3}\kappa^{\epsilon}$ is decreasing, Lemma~\ref{lem:sigmaeps} with $\nu=\frac23$ allows us to deduce that the third condition holds true for $\sigma_{\varepsilon} = \frac{\kappa_\varepsilon}{2\varepsilon}$.
This implies a variational source condition with index function
\[
\psi(\tau) = \inf_{\varepsilon\in\mathcal{I}}
\left[\frac{\kappa_\varepsilon}{2\varepsilon}\sqrt{\tau}+ \kappa_{\varepsilon}^2\right].
\]
If we choose $\varepsilon=\tau^{\frac14}$ for $0<\tau<1$, we obtain
\[
\psi(\tau)\leq2\kappa_{\tau^{1/4}}\tau^{\frac14}+\kappa_{\tau^{1/4}}^2=\calO\paren{e^{-\sqrt{2(-\log b)(-\log\tau)}}}
\]
as $\tau\searrow 0$.
This yields the assertion.
\end{proof}

\begin{proof}[Proof of Theorem \ref{rates2}]
We set $\Phi(t) = 2\sqrt{\psi(t^2)}$ with the functions
$\psi$ in the variational source conditions of Propositions
\ref{prop:VSC_pol_decay} and \ref{prop:VSC_exp_decay}.
Then the statement follows from
Proposition \ref{prop:convTikh} and Remark \ref{rem:VSCstab}.
\end{proof}

\section{Conclusions}\label{sec:conclusions}
We have shown that the data acquired in the SQUIRRELS
method (without noise and modelling errors) are indeed sufficient to uniquely determine the unknown electron
density matrix. Moreover, we have estimated the
intrinsic difficulty (or degree of ill-posedness) of the inverse
problem to reconstruct a density matrix from these data
under noise.
As expected, the answer strongly depends on the type
of available a-priori information on the unknown
density matrix. If this matrix is band-limited, we
obtain H\"older rates, whereas under polynomial decay
conditions only logarithmic rates can be shown.
For the most realistic exponential decay conditions
the rates are in between H\"older and logarithmic rates.

We conjecture that both the stability estimates and the  convergence rates
are of optimal order under the given a-priori
information if $T$ is considered as an operator defined
on all bounded, Hermitian matrices. However, it is possible that
the positive semidefiniteness constraint, which has a
strong regularizing effect in numerical experiments,
may be further explointed to improve rates.

Another topic of further research in this direction
may be to extend the analysis of this paper to a
model involving a continuum of energy states.

\bibliographystyle{siam}
\bibliography{electron}

\end{document}